\newtheorem{theorem}{Theorem}[section]
\newtheorem{proposition}[theorem]{Proposition}
\newtheorem{lemma}[theorem]{Lemma}
\newtheorem{corollary}[theorem]{Corollary}
\theoremstyle{definition}
\newtheorem{definition}[theorem]{Definition}
\newtheorem{example}[theorem]{Example}
\newtheorem{remark}[theorem]{Remark}
\numberwithin{equation}{section}
\newcommand*\diff{\mathop{}\!\mathrm{d}}
\DeclareMathOperator{\id}{Id}
\DeclareMathOperator{\Tr}{Tr}
\begin{document}
\title[Decay rates for linear systems with random switching]{Decay rates for stabilization of linear continuous-time systems with random switching}
\author[Fritz Colonius]{Fritz Colonius\textsuperscript{$*$}}
\author[Guilherme Mazanti]{Guilherme Mazanti\textsuperscript{$\dagger$}}
\thanks{\textsuperscript{$*$}Institut f\"{u}r Mathematik, Universit\"{a}t Augsburg,
86159 Augsburg, Germany. Supported by DFG grant Co 124/19-1.}
\thanks{\textsuperscript{$\dagger$}Laboratoire de Math\'ematiques d'Orsay,
Univ.\ Paris-Sud, Universit\'e Paris-Saclay, 91405 Orsay, France. This work
was prepared while the author was with CMAP \& Inria, team GECO,
\'Ecole Polytechnique, CNRS, Universit\'e Paris-Saclay, 91128 Palaiseau Cedex, France.
Partially supported by the iCODE Institute, research project of the IDEX
Paris-Saclay, and by the Hadamard Mathematics LabEx (LMH) through the grant
number ANR-11-LABX-0056-LMH in the ``Programme des Investissements d'Avenir''.}
\keywords{Random switching, almost sure stabilization, arbitrary rate of convergence,
Lyapunov exponents, persistent excitation, Multiplicative Ergodic Theorem}
\subjclass[2010]{93C30, 93D15, 37H15}
\date{\today}

\begin{abstract}
For a class of linear switched systems in continuous time a controllability
condition implies that state feedbacks allow to achieve almost sure
stabilization with arbitrary exponential decay rates. This is based on the
Multiplicative Ergodic Theorem applied to an associated system in discrete
time. This result is related to the stabilizability problem for linear persistently excited systems.

\end{abstract}
\maketitle

\renewcommand{\theenumi}{(\roman{enumi})} \renewcommand{\labelenumi}{\theenumi}

\section{Introduction}

Let $N$ be a positive integer and consider the family of $N$ control systems
\begin{equation}
\label{IntroMainSyst}\dot{x}_{i}(t) = A_{i}x_{i}(t) + \alpha_{i}(t) B_{i}
u_{i}(t), \qquad i \in\{1, \dotsc, N\},
\end{equation}
where, for $i \in\{1, \dotsc, N\}$, $x_{i}(t) \in\mathbb{R}^{d_{i}}$ is the
state of the subsystem $i$, $u_{i}(t)\in\mathbb{R}^{m_{i}}$ is the control
input of the subsystem $i$, $d_{i}$ and $m_{i}$ are non-negative integers,
$A_{i}$ and $B_{i}$ are matrices with real entries and appropriate dimensions,
and $\alpha_{i}:\mathbb{R}_{+}\rightarrow\{0,1\}$ is a switching signal
determining the activity of the control input on the $i$-th subsystem. We
assume that at each time the control input is active in exactly one subsystem,
i.e.,
\begin{equation}
\label{HypoOnlyOneAlpha}\sum\nolimits_{i=1}^{N} \alpha_{i}(t) = 1 \text{ for
all } t \in\mathbb{R}_{+}.
\end{equation}
This paper analyzes the stabilizability of all subsystems in
\eqref{IntroMainSyst} by linear feedback laws $u_{i}(t)=K_{i}x_{i}(t)$ under
randomly generated switching signals $\alpha_{1},\dotsc,\alpha_{N}$ satisfying
\eqref{HypoOnlyOneAlpha}, and the maximal almost sure exponential decay rates
that can be achieved with such feedbacks.

System \eqref{IntroMainSyst} is a switched control system, where the switching
signals $\alpha_{1},\dotsc,\alpha_{N}$ affect the activity of the control
input. Switched systems have been extensively studied in the literature, both
for deterministic switching signals, such as in the monographs Liberzon
\cite{Liberzon2003Switching} and Sun and Ge \cite{Sun2005Switched} and the
surveys Lin and Antsaklis \cite{Lin2009Stability}, Margaliot
\cite{Margaliot2006Stability}, and Shorten, Wirth, Mason, Wulff, and King
\cite{Shorten2007Stability}, and for random switching signals, such as in the
monographs Costa, Fragoso, and Todorov \cite{Costa2013Continuous} and Davis
\cite{Davis1993Markov}, and papers such as Bena\"{\i}m, Le Borgne, Malrieu,
and Zitt \cite{BBMZ15}, Cloez and Hairer \cite{Cloez2015Exponential}, and
Guyon, Iovleff, and Yao \cite{Guyon2004Linear}. Such systems are useful models
in several applications, ranging from air traffic control, electronic
circuits, and automotive engines to chemical processes and population models
in biology.

An important motivation for our work comes from the theory of persistently
excited control systems, in which one considers systems of the form
\begin{equation}
\label{IntroPESyst}\dot{x}(t)=Ax(t)+\alpha(t)Bu(t),
\end{equation}
with $x(t)\in\mathbb{R}^{d}$, $u(t)\in\mathbb{R}^{m}$, $A$ and $B$ matrices
with real entries and appropriate dimensions, and $\alpha$ a $(T,\mu
)$-persistently exciting (PE) signal for some positive constants $T\geq\mu$,
i.e., a signal $\alpha\in L^{\infty}(\mathbb{R}_{+},[0,1])$ satisfying, for
every $t\geq0$,
\begin{equation}
\label{CondPE}\int_{t}^{t+T}\alpha(s)\diff s\geq\mu
\end{equation}
(cf.\ Chaillet, Chitour, Lor\'{\i}a, and Sigalotti \cite{Chaillet2008Uniform},
Chitour, Colonius, and Sigalotti \cite{Chitour2014Growth}, Chitour, Mazanti,
and Sigalotti \cite{Chitour2013Stabilization}, Chitour and Sigalotti
\cite{Chitour2010Stabilization}, Srikant and Akella
\cite{Srikant2015Arbitrarily}). Notice that, when $\alpha$ takes its values in
$\{0,1\}$, \eqref{IntroPESyst} can be seen as a particular case of
\eqref{IntroMainSyst} by adding a trivial subsystem (cf.\ Corollary \ref{CoroToMainTheo}). The stabilizability problem for \eqref{IntroPESyst}
consists in investigating if, given $A$, $B$, $T$, and $\mu$, one can find a
linear feedback $u(t)=Kx(t)$ which stabilizes \eqref{IntroPESyst}
exponentially for every $(T,\mu)$-persistently exciting signal $\alpha$. This
problem has been considered in \cite{Chitour2010Stabilization}, where the
authors provide sufficient conditions for stabilizability and prove that, in
contrast to the situation for autonomous linear control systems,
controllability does not imply stabilizability with arbitrary exponential
decay rates, even if one considers only persistently exciting signals taking
values in $\{0,1\}$. The main result of our paper, Theorem \ref{MainTheoApp},
implies that, if one requires the feedback to stabilize \eqref{IntroPESyst}
for \emph{almost every} randomly generated signal $\alpha$ (with respect to
the random model described in Section \ref{SecMarkov}), then one can retrieve
stabilizability with arbitrary decay rates, giving thus a positive answer to
an open problem stated by Chitour and Sigalotti (personal communication).

Some works in the literature have addressed the stabilization of systems
similar to \eqref{IntroPESyst} with randomly generated signals $\alpha$, such
as Diwadkar, Dasgupta, and Vaidya \cite{Diwadkar2015Control} and Diwadkar and
Vaidya \cite{Diwadkar2014Stabilization}. Both references provide criteria for
the exponential mean square stabilization of an analogue of
\eqref{IntroPESyst} in discrete time, with an additional non-linear term in
\cite{Diwadkar2015Control}, the signal $\alpha$ being a sequence of
independent identically distributed Bernoulli random variables in $\{0,1\}$ in
\cite{Diwadkar2015Control} and a sequence of real-valued square-integrable
random variables with the same expected value and variance in
\cite{Diwadkar2014Stabilization}. With respect to the setting of the present
paper, apart from the fact that we restrict our attention to more general
systems under the form \eqref{IntroMainSyst} and in continuous time, a major
difference is that we are interested here not only in stabilizability, but
also in obtaining arbitrarily large almost sure exponential decay rates.

In this paper, in order to study the stabilizability by linear feedback laws
of \eqref{IntroMainSyst}, we rewrite it as
\begin{equation}
\label{MainSystAugm}\dot{x}(t)=\widehat{A}x(t)+\widehat{B}_{\alpha
(t)}u_{\alpha(t)}(t),
\end{equation}
where
\begin{equation}
\label{AugmState}x(t) =
\begin{pmatrix}
x_{1}(t)\\
x_{2}(t)\\
\vdots\\
x_{i}(t)\\
\vdots\\
x_{N}(t)
\end{pmatrix}
\in\mathbb{R}^{d},\quad\widehat{A} =
\begin{pmatrix}
A_{1} & 0 & \cdots & 0 & \cdots & 0\\
0 & A_{2} & \cdots & 0 & \cdots & 0\\
\vdots & \vdots & \ddots & \vdots & \ddots & \vdots\\
0 & 0 & \cdots & A_{i} & \cdots & 0\\
\vdots & \vdots & \ddots & \vdots & \ddots & \vdots\\
0 & 0 & \cdots & 0 & \cdots & A_{N}
\end{pmatrix}
,\quad\widehat{B}_{i} =
\begin{pmatrix}
0\\
0\\
\vdots\\
B_{i}\\
\vdots\\
0
\end{pmatrix}
,
\end{equation}
$d=d_{1}+\dotsb+d_{N}$, and $\alpha:\mathbb{R}_{+}\rightarrow\{1,\dotsc,N\}$
is defined from $\alpha_{1},\dotsc,\alpha_{N}:\mathbb{R}_{+}\rightarrow
\{0,1\}$ by setting $\alpha(t)$ to be the unique index $i\in\{1,\dotsc,N\}$
such that $\alpha_{i}(t)=1$. We then look for linear feedback laws of the form
$u_{i}(t)=K_{i}P_{i}x$, where $P_{i}\in\mathcal{M}_{d_{i},d}(\mathbb{R})$ is
the matrix associated with the canonical projection onto the $i$-th factor of
the product $\mathbb{R}^{d}=\mathbb{R}^{d_{1}}\times\dotsb\times
\mathbb{R}^{d_{N}}$. With such feedback laws, \eqref{MainSystAugm} reads
\[
\dot{x}(t)=\left(  \widehat{A}+\widehat{B}_{\alpha(t)}K_{\alpha(t)}
P_{\alpha(t)}\right)  x(t).
\]

Before considering the stabilizability of \eqref{MainSystAugm}, we begin the
paper by the stability analysis of the linear switched system with random
switching
\begin{equation}
\label{MainSystSwitch}\dot{x}(t)=L_{\alpha(t)}x(t),
\end{equation}
where $L_{1},\dotsc,L_{N}\in\mathcal{M}_{d}(\mathbb{R})$ and $\alpha
:\mathbb{R}_{+}\rightarrow\{1,\dotsc,N\}$ is as before. We characterize its
exponential behavior through its Lyapunov exponents, using the classical
Multiplicative Ergodic Theorem due to Oseledets (cf.\ Arnold
\cite{Arnold1998Random}). It turns out that a direct application of this
theorem to systems in continuous time with random switching is not feasible,
since in general they do not define random dynamical systems in the sense of
\cite{Arnold1998Random} (cf.\ Example \ref{Example_noRDS}). Instead, we apply
the Multiplicative Ergodic Theorem to an associated system in discrete time
and then deduce results for the Lyapunov exponents of the continuous-time
system \eqref{MainSystSwitch}. We remark that Lyapunov exponents for
continuous-time systems with random switching are also considered by Li, Chen,
Lam, and Mao in \cite{LCLM12}, but under assumptions on the random switching
signal $\alpha$ guaranteeing that the corresponding switched system is a
random dynamical system, which allows the direct use of the Multiplicative
Ergodic Theorem in continuous time.

The considered linear equations with random switching \eqref{MainSystSwitch}
form Piecewise Deterministic Markov Processes (PDMP). These processes were
introduced in Davis \cite{Davis1984Piecewise} and have since been extensively
studied in the literature. For an analysis of their invariant measures, in
particular, their supports, cf.\ Bakhtin and Hurth \cite{BH12} and
Bena\"{\i}m, Le Borgne, Malrieu, and Zitt \cite{BBMZ15}, also for further
references. An important particular case which also attracts much research
interest is that of Markovian jump linear systems (MJLS), in which one assumes
that the random switching signal is generated by a continuous-time Markov
chain. For more details, we refer to Bolzern, Colaneri, and De Nicolao
\cite{Bolzern2006Almost}, Fang and Loparo \cite{Fang2002Stabilization}, and to
the monograph Costa, Fragoso, and Todorov \cite{Costa2013Continuous}. The case
of nonlinear switched systems with random switching signals has also been
considered in the literature, cf.\ e.g.\ Chatterjee and Liberzon
\cite{Chatterjee2007Stability}, where multiple Lyapunov functions are used to
derive a stability criterion under some slow switching condition that contains
as a particular case switching signals coming from continuous-time Markov
chains. We also remark that several different notions of stability for systems
with random switching have been used in the literature; see, e.g., Feng,
Loparo, Ji, and Chizeck \cite{Feng1992Stochastic} for a comparison between the
usual notions in the context of MJLS. The one considered in this paper is that
of almost sure stability.

The contents of this paper is as follows:

Section \ref{SecMarkov} constructs the random signals $\alpha$ in
\eqref{MainSystAugm} and \eqref{MainSystSwitch}. Example \ref{Example_noRDS}
shows that, in general, \eqref{MainSystSwitch} endowed with such random
switching signals does not define a random dynamical system, and Remark
\ref{Remark_LiChen} discusses the relation to previous works in the
literature. Section \ref{SecDiscretization} introduces an associated system in
discrete time, which defines a random dynamical system in discrete time. We
discuss relations between the Lyapunov exponents for continuous- and
discrete-time systems and state the conclusions we obtain from the
Multiplicative Ergodic Theorem. Section \ref{SecMaxLyap} derives a formula for
the maximal Lyapunov exponent, which is the main ingredient in the stability
analysis of \eqref{MainSystSwitch}. Finally, Section \ref{SecApplications}
presents the main result of this paper, namely that almost sure stabilization
can be achieved for \eqref{IntroMainSyst} with arbitrary decay rate under a
controllability hypothesis.

\medskip

\textbf{Notation:} The sets $\mathbb{N}^{\ast}$ and $\mathbb{N}$ are used to
denote the positive and nonnegative integers, respectively. For $N\in
\mathbb{N}^{\ast}$ we let $\underline{N}:=\{1,...,N\}$ and $\mathbb{R}_{+}:=[0,\infty), \mathbb{R}_{+}^{\ast}:=(0,\infty)$.

\section{Random model for the switching signal}

\label{SecMarkov}

Let $N,d\in\mathbb{N}^{\ast}$ and $L_{1},\dotsc,L_{N}\in\mathcal{M}_{d}(\mathbb{R})$ and consider system \eqref{MainSystSwitch} with a switching
signal $\alpha$ belonging to the set $\mathcal{P}$ defined by
\[
\mathcal{P}:=\left\{  \alpha:\mathbb{R}_{+}\rightarrow\underline{N}\text{
piecewise constant and right continuous}\right\}  .
\]
Recall that a piecewise constant function has only finitely many discontinuity
points on any bounded interval. Given an initial condition $x_{0}\in
\mathbb{R}^{d}$ and $\alpha\in\mathcal{P}$, system \eqref{MainSystSwitch}
admits a unique solution defined on $\mathbb{R}_{+}$, which we denote by
$\varphi_{\mathrm{c}}(\cdot;x_{0},\alpha)$. Furthermore, for $i\in
\underline{N}$, we denote by $\Phi^{i}$ the linear flow defined by the matrix
$L_{i}$, i.e., $\Phi_{t}^{i}=e^{L_{i}t}$ for every $t\in\mathbb{R}$.

In order to describe the random model for the switching signal $\alpha$, let
us first introduce some notation. Given a measurable space $X$, we denote by
$\mathrm{Pr}(X)$ the set of all probability measures on $X$. The set
$\underline{N}$ is assumed to be endowed with the $\sigma$-algebra
$\mathfrak{P}(\underline{N})$ containing all subsets of $\underline{N}$, and
$\mathbb{R}_{+}$ and $\mathbb{R}_{+}^{\ast}$ are assumed to be endowed with
their respective Borel $\sigma$-algebras, denoted for simplicity by
$\mathfrak{B}$ in both cases. Let $\Omega=(\underline{N}\times\mathbb{R}_{+})^{\mathbb{N}^{\ast}}$ and endow $\Omega$ with the standard product
$\sigma$-algebra $\mathfrak{F}=(\mathfrak{P}(\underline{N})\allowbreak
\times\mathfrak{B})^{\mathbb{N}^{\ast}}$ (see, e.g., Halmos \cite[\S 38,
\S 49]{Halmos1974Measure}).

Let $M \in\mathcal{M}_{N}(\mathbb{R})$ be an irreducible right-stochastic
matrix and $p \in\mathbb{R}^{N}$ be its unique invariant probability vector
(regarded here as a row vector). For $i \in\underline N$, let $\mu_{i}
\in\mathrm{Pr}(\mathbb{R}_{+}^{\ast})$ and assume that $\mu_{i}$ has finite
expectation $\tau_{i} = \int_{\mathbb{R}_{+}^{\ast}}t\diff\mu_{i}(t)\in(0,\infty)$ (we also regard $\mu_{i}$ as a Borel probability measure on
$\mathbb{R}_{+}$ whenever necessary). Consider the time-homogeneous
discrete-time Markov process in $\underline{N} \times\mathbb{R}_{+}$ whose
transition probabilities $P: \underline{N}\times\mathbb{R}_{+} \rightarrow
\mathrm{Pr}(\underline{N}\times\mathbb{R}_{+})$ and initial law $\nu_{1}
\in\mathrm{Pr}(\underline N \times\mathbb{R}_{+})$ are given by
\begin{align}
P(i,t)(\{j\}\times U)  &  = M_{ij}\mu_{j}(U), &  &  \forall i,j\in
\underline{N},\;\forall t\in\mathbb{R}_{+},\;\forall U\in\mathfrak{B},\label{DefProbaMarkov}\\
\nu_{1}(\{j\}\times U)  &  = p_{j}\mu_{j}(U), &  &  \forall j\in\underline
{N},\;\forall U\in\mathfrak{B}. \label{DefMarkovNu1}\end{align}
Notice that the associated transition operator $T:\mathrm{Pr}(\underline
{N}\times\mathbb{R}_{+})\rightarrow\mathrm{Pr}(\underline{N}\times
\mathbb{R}_{+})$ of this process is given by
\begin{equation}
\label{Transition}T\nu(\{j\}\times U)=\sum_{i=1}^{N}\nu(\{i\}\times
\mathbb{R}_{+})M_{ij}\mu_{j}(U),\qquad\forall j\in\underline{N},\;\forall
U\in\mathfrak{B},
\end{equation}
and it induces a measure $\mathbb{P }\in\mathrm{Pr}(\Omega)$ defined, for
$n\in\mathbb{N}^{\ast}$, $i_{1},\dotsc,i_{n}\in\underline{N}$, and
$U_{1},\dotsc,U_{n}\in\mathfrak{B}$, by
\begin{equation}
\label{DefMathbbP}\begin{aligned} & \mathbb{P}\left( \left( \{i_{1}\}\times U_{1}\right) \times\left( \{i_{2}\}\times U_{2}\right) \times\dotsb\times\left( \{i_{n}\}\times U_{n}\right) \times\left( \underline{N}\times\mathbb{R}_{+}\right) ^{\mathbb{N}^{\ast}\setminus\underline{n}}\right) \\ {}={} & p_{i_{1}}\mu_{i_{1}}(U_{1})M_{i_{1}i_{2}}\mu_{i_{2}}(U_{2})\dotsm M_{i_{n-1}i_{n}}\mu_{i_{n}}(U_{n}). \end{aligned}
\end{equation}
(For the definition of a discrete-time Markov process in an uncountable set
and its transition probability, initial law, and transition operator, we refer
to Hairer \cite{Hairer2006Ergodic} and Meyn and Tweedie \cite[Chapter
3]{Meyn2009Markov}.)

To construct a random switching signal $\alpha$ from a certain $\omega=
(i_{n}, t_{n})_{n=1}^{\infty}\in\Omega$, we regard $(i_{n})_{n=1}^{\infty}$ as
the sequence of states taken by $\alpha$ and $t_{n}$ as the time spent in the
state $i_{n}$, according to the following definition.

\begin{definition}
\label{DefPmbAlpha} We define the map $\pmb\alpha:\Omega\rightarrow
\mathcal{P}$ as follows: for $\omega=(i_{n},t_{n})_{n=1}^{\infty}\in\Omega$,
we set $s_{0}=0$, $s_{n}=\sum_{k=1}^{n}t_{k}$ for $n\in\mathbb{N}^{\ast}$, and
$\pmb\alpha(\omega)(t)=i_{n}$ for $t\in\left[  s_{n-1}, s_{n}\right)  $,
$n\in\mathbb{N}^{\ast}$.
\end{definition}

This construction of $\pmb\alpha$ amounts to choosing a random initial state
according to the probability law defined by $p$ and, at every switching event
to a state $i$, choosing a random time to stay in this state according to the
law $\mu_{i}$ and a random next state according to the probability law
corresponding to the $i$-th row $(M_{ij})_{j=1}^{N}$ of the matrix $M$. Notice
that $\pmb\alpha(\omega)$ is well-defined only when $\omega$ belongs to the
subset $\Omega_{0} \subset\Omega$ defined by
\begin{equation}
\label{DefiOmega0}\Omega_{0} = \left\{  (i_{n}, t_{n})_{n=1}^{\infty}
\in\Omega\;\middle|\; \sum_{n=1}^{\infty} t_{n} = \infty\right\}  .
\end{equation}
One can easily prove by standard techniques that $\mathbb{P}(\Omega_{0}) = 1$,
yielding that $\pmb\alpha(\omega)$ is well-defined for almost every $\omega
\in\Omega$.

\begin{remark}
In general, since $\mu_{1}, \dotsc, \mu_{N}$ are arbitrary Borel probability
measures on $\mathbb{R}_{+}^{\ast}$, $\pmb\alpha(\omega)$ is not a
continuous-time Markov process on $\underline N$. On the other hand, every
time-homogeneous continuous-time Markov process on $\underline N$ can be
written using the previous definitions by a suitable choice of $M$, $p$, and
$\mu_{1}, \dotsc, \mu_{N}$. Our more general framework covers some important
practical cases that cannot be modeled by continuous-time Markov processes.
For instance, one can model a deterministic switching signal which switches
periodically between $\underline N$ subsystems with prescribed times $T_{1},
\dotsc, T_{N}$ spent in each subsystem by choosing $M$ as an appropriate
irreducible permutation matrix encoding the switching sequence and $\mu_{i} =
\delta_{T_{i}}$ for $i \in\underline N$, where $\delta_{T}$ denotes the Dirac
measure at $T$. In practical implementations, the time spent at a state $i$
may not be exactly equal to $T_{i}$ and some random switches may occur, which
can be modeled in our framework by perturbing the matrix $M$ and choosing
$\mu_{i}$, e.g., as an absolutely continuous measure concentrated around
$T_{i}$.
\end{remark}

In order to consider solutions of \eqref{MainSystSwitch} for signals $\alpha$
chosen randomly according to the previous construction, we use the solution
map $\varphi_{\mathrm{c}}$ of \eqref{MainSystSwitch} to provide the following definition.

\begin{definition}
We define the continuous-time map
\begin{equation}
\label{MainSystContRand}\varphi_{\mathrm{rc}} : \left\{
\begin{aligned} \mathbb{R}_{+} \times\mathbb{R}^{d} \times\Omega_{0} & \to \mathbb{R}^{d}\\ (t; x_{0}, \omega) & \mapsto \varphi_{\mathrm{c}}(t; x_{0}, \pmb\alpha (\omega)). \end{aligned} \right.
\end{equation}
For $x_{0} \in\mathbb{R}^{d} \setminus\{0\}$ and almost every $\omega\in
\Omega$, we define the \emph{Lyapunov exponent} of the continuous-time system
\eqref{MainSystContRand} by
\begin{equation}
\label{ContLyap}\lambda_{\mathrm{rc}}(x_{0}, \omega) = \limsup_{t \to\infty}
\frac{1}{t} \log\left\|  \varphi_{\mathrm{rc}}(t; x_{0}, \omega)\right\|  .
\end{equation}

\end{definition}

The Lyapunov exponent $\lambda_{\mathrm{rc}}$ is used to characterize the
asymptotic behavior of \eqref{MainSystContRand}. A natural idea to obtain
information on such Lyapunov exponents would be to apply the continuous-time
Multiplicative Ergodic Theorem (see, e.g., Arnold \cite[Theorem 3.4.1]{Arnold1998Random}). To do so, $\varphi_{\mathrm{rc}}$ should define a random
dynamical system on $\mathbb{R}^{d}\times\Omega$, i.e., one would have to
provide a metric dynamical system $\theta$ on $\Omega$ --- a measurable
dynamical system $\theta:\mathbb{R}_{+}\times\Omega\rightarrow\Omega$ on
$(\Omega,\mathfrak{F},\mathbb{P})$ such that $\theta_{t}$ preserves
$\mathbb{P}$ for every $t\geq0$ --- in such a way that $\varphi_{\mathrm{rc}}$
becomes a cocycle over $\theta$. However, in general the natural choice for
$\theta$ to obtain the cocycle property for $\varphi_{\mathrm{rc}}$, namely
the time shift, does not define such a measure preserving map, as shown in the
following example.

\begin{example}
\label{Example_noRDS}For $t\geq0$, let $\theta_{t}:\Omega\rightarrow\Omega$ be
defined for almost every $\omega\in\Omega$ as follows. For $\omega
=(i_{j},t_{j})_{j=1}^{\infty}\in\Omega_{0}$, set $s_{0}=0$, $s_{k}=\sum
_{j=1}^{k}t_{j}$ for $k\in\mathbb{N}^{\ast}$. Let $n\in\mathbb{N}^{\ast}$ be
the unique integer such that $t\in\left[  s_{n-1},s_{n}\right)  $. We define
$\theta_{t}(\omega)=(i_{j}^{\ast},t_{j}^{\ast})_{j=1}^{\infty}$ by
$i_{j}^{\ast}=i_{n+j-1}$ for $j\in\mathbb{N}^{\ast}$, $t_{1}^{\ast}=s_{n}-t$,
$t_{j}^{\ast}=t_{n+j-1}$ for $j\geq2$. One immediately verifies that
$\theta_{t}$ corresponds to the time shift in $\mathcal{P}$, i.e., for every
$t,s\geq0$ and $\omega\in\Omega_{0}$, one has
\[
\pmb\alpha(\theta_{t}\omega)(s)=\pmb\alpha(\omega)(t+s).
\]
However, the map $\theta_{t}$ in $(\Omega,\mathfrak{F})$ does not preserve the
measure $\mathbb{P}$ in general. Indeed, suppose that $\mu_{i}=\delta_{1}$ for
every $i\in\underline{N}$, where $\delta_{1}$ denotes the Dirac measure at
$1$. In particular, a set $E\in\mathfrak{F}$ has nonzero measure only if $E$
contains a point $(i_{j},t_{j})_{j=1}^{\infty}$ with $t_{j}=1$ for every
$j\in\mathbb{N}^{\ast}$. For $r\in\mathbb{N}^{\ast}$ and $i_{1},\dotsc
,i_{r}\in\underline{N}$, let
\[
E=\left(  \{i_{1}\}\times\{1\}\right)  \times\dotsb\times\left(
\{i_{r}\}\times\{1\}\right)  \times\left(  \underline{N}\times\mathbb{R}_{+}\right)  ^{\mathbb{N}^{\ast}\setminus\underline{r}}.
\]
Then $\mathbb{P}(E)=p_{i_{1}}M_{i_{1}i_{2}}\dotsm M_{i_{r-1}i_{r}}$, and, for
$t\geq0$, $\theta_{t}^{-1}(E)$ is the set of points $(i_{j}^{\ast},t_{j}^{\ast})_{j=1}^{\infty}$ such that, setting $s_{0}^{\ast}=0$, $s_{k}^{\ast
}=\sum_{j=1}^{k}t_{j}^{\ast}$ for $k\in\mathbb{N}^{\ast}$, and $n\in
\mathbb{N}^{\ast}$ the unique integer such that $t\in\left[  s_{n-1}^{\ast
},s_{n}^{\ast}\right)  $, one has $s_{n}^{\ast}-t=1$, $t_{n+j-1}^{\ast}=1$ for
$j=2,\dotsc,r$, and $i_{n+j-1}^{\ast}=i_{j}$ for $j\in\underline{r}$. If
$t\notin\mathbb{N}$, then $s_{n}^{\ast}=t+1\notin\mathbb{N}$, and thus there
exists $j\in\underline{n}$ such that $t_{j}^{\ast}\not =1$. We have shown
that, if $t\notin\mathbb{N}$, then, for every $\omega=(i_{j}^{\ast},t_{j}^{\ast})_{j=1}^{\infty}\in\theta_{t}^{-1}(E)$, there exists
$j\in\mathbb{N}^{\ast}$ such that $t_{j}^{\ast}\not =1$, and thus
$\mathbb{P}(\theta_{t}^{-1}(E))=0$, hence $\theta_{t}$ does not preserve the
measure $\mathbb{P}$.
\end{example}

\begin{remark}
\label{Remark_LiChen} For some particular choices of $\mu_{1}, \dotsc, \mu
_{N}$, the time-shift $\theta_{t}$ may preserve $\mathbb{P}$, in which case
the continuous-time Multiplicative Ergodic Theorem can be applied directly to
\eqref{MainSystContRand}. This special case falls in the framework of Li,
Chen, Lam, and Mao \cite{LCLM12}. An important particular case where
$\theta_{t}$ preserves $\mathbb{P}$ is when $\mu_{1}, \dotsc, \mu_{N}$ are
chosen in such a way that $\pmb\alpha$ becomes a homogeneous continuous-time
Markov chain, which is the case treated, e.g., in Bolzern, Colaneri, and De
Nicolao \cite{Bolzern2006Almost}, and in Fang and Loparo
\cite{Fang2002Stabilization}. The results we provide in Section
\ref{SecMaxLyap} generalize the corresponding almost sure stability criteria
from \cite{Bolzern2006Almost, Fang2002Stabilization, LCLM12} to randomly
switching signals constructed according to Definition \ref{DefPmbAlpha}.
\end{remark}

\section{Associated discrete-time system and Lyapunov exponents}

\label{SecDiscretization}

Example \ref{Example_noRDS} shows that in general one cannot expect to obtain
a random dynamical system from $\varphi_{\mathrm{rc}}$ in order to apply the
continuous-time Multiplicative Ergodic Theorem. Our strategy to study the
exponential behavior of $\varphi_{\mathrm{rc}}$ relies instead on defining a
suitable discrete-time map $\varphi_{\mathrm{rd}}$ associated with
$\varphi_{\mathrm{rc}}$, in such a way that $\varphi_{\mathrm{rd}}$ does
define a discrete-time random dynamical system --- to which the discrete-time
Multiplicative Ergodic Theorem can be applied --- and that the exponential
behavior of $\varphi_{\mathrm{rc}}$ and $\varphi_{\mathrm{rd}}$ can be compared.

\begin{definition}
\label{DefiVarphiRd} For $\omega= (i_{n}, t_{n})_{n=1}^{\infty}\in\Omega$, we
set $s_{n}(\omega) = \sum_{k=1}^{n} t_{k}$ for $n \in\mathbb{N}^{\ast}$ and
$s_{0}(\omega) = 0$. We define the discrete-time map $\varphi_{\mathrm{rd}}$
by
\begin{equation}
\label{MainSystDiscrRand}\varphi_{\mathrm{rd}} : \left\{
\begin{aligned} \mathbb{N }\times\mathbb{R}^{d} \times\Omega_{0} & \to \mathbb{R}^{d}\\ (n; x_{0}, \omega) & \mapsto \varphi_{\mathrm{rc}}(s_{n}(\omega); x_{0}, \omega). \end{aligned} \right.
\end{equation}
For $x_{0} \in\mathbb{R}^{d} \setminus\{0\}$ and almost every $\omega\in
\Omega$, we define the \emph{Lyapunov exponent} of the discrete-time system
\eqref{MainSystDiscrRand} by
\begin{equation}
\label{DiscrLyap}\lambda_{\mathrm{rd}}(x_{0}, \omega) = \limsup_{n \to\infty}
\frac{1}{n} \log\left\|  \varphi_{\mathrm{rd}}(n; x_{0}, \omega)\right\|  .
\end{equation}

\end{definition}

The map $\varphi_{\mathrm{rd}}$ corresponds to regarding the continuous-time
map $\varphi_{\mathrm{rc}}$ only at the switching times $s_{n}(\omega)$. It is
the solution map of the random discrete-time equation
\begin{equation}
\label{MainSystDiscr}x_{n} = e^{L_{i_{n}} t_{n}} x_{n-1}.
\end{equation}
System \eqref{MainSystDiscr} is obtained from \eqref{MainSystSwitch} by taking
the values of a continuous-time solution at the discrete times $s_{n}(\omega
)$. The sequence $(s_{n}(\omega))_{n=0}^{\infty}$ contains all the
discontinuities of $\pmb\alpha(\omega)$ and may also contain times with
trivial jumps. The Lyapunov exponent $\lambda_{\mathrm{rd}}$ characterizes the
asymptotic behavior of $\varphi_{\mathrm{rd}}$.

Notice that the solution maps $\varphi_{\mathrm{rc}}$ and $\varphi
_{\mathrm{rd}}$ satisfy, for every $x_{0} \in\mathbb{R}^{d}$ and almost every
$\omega= (i_{n}, t_{n})_{n=1}^{\infty}\in\Omega$,
\begin{align}
\varphi_{\mathrm{rc}}(0;x_{0},\omega)  &  =x_{0},\nonumber\\
\varphi_{\mathrm{rc}}(t;x_{0},\omega)  &  = \Phi^{\pmb\alpha(\omega
)(s_{n}(\omega))}_{t - s_{n}(\omega)} \varphi_{\mathrm{rc}}(s_{n}(\omega);x_{0},\omega),\text{ for }n\in\mathbb{N}\text{ and } t\in\left(
s_{n}(\omega),s_{n+1}(\omega)\right]  , \label{ContinuousFlow}\end{align}
and
\begin{align}
\varphi_{\mathrm{rd}}(0;x_{0},\omega)  &  =x_{0},\nonumber\\
\varphi_{\mathrm{rd}}(n+1;x_{0},\omega)  &  = \Phi^{\pmb\alpha(\omega
)(s_{n}(\omega))}_{t_{n+1}}\varphi_{\mathrm{rd}}(n; x_{0},\omega),
\qquad\text{ for }n\in\mathbb{N}. \label{DiscreteFlow}\end{align}

We now prove that $\varphi_{\mathrm{rd}}$ defines a discrete-time random
dynamical system on $\mathbb{R}^{d} \times\Omega$. To do so, we must first
provide a discrete-time metric dynamical system $\theta$ on $(\Omega,
\mathfrak{F}, \mathbb{P})$, which can be chosen simply as the usual shift
operator. Let $\theta: \Omega\to\Omega$ be defined by
\begin{equation}
\label{DefiTheta}\theta((i_{n},t_{n})_{n=1}^{\infty}) = (i_{n+1},t_{n+1})_{n=1}^{\infty}.
\end{equation}
One can easily verify, using \eqref{DefMathbbP} and the fact
that $p M = p$, that the measure $\mathbb{P}$ is invariant under $\theta$, and
thus $\theta$ is a discrete-time metric dynamical system in $(\Omega,
\mathfrak{F}, \allowbreak\mathbb{P})$. Moreover, since $\theta(\Omega_{0}) =
\Omega_{0}$, $\theta$ also defines a metric dynamical system in $(\Omega_{0},
\mathfrak{F}, \mathbb{P})$ (where $\mathfrak{F}$ and $\mathbb{P}$ are
understood to be restricted to $\Omega_{0}$).

Notice that $\theta$ is ergodic in $(\Omega, \mathfrak{F},
\mathbb{P})$. Indeed, given $\nu\in\mathrm{Pr}(\underline N \times
\mathbb{R}_{+})$, let $\mathbb{P}_{\nu}$ be the probability measure on
$\Omega$ associated with the discrete-time Markov process in $\underline N
\times\mathbb{R}_{+}$ with transition probabilities $P$ given by
\eqref{DefProbaMarkov} and initial law $\nu$. One can easily check that
$\mathbb{P}_{\nu}$ is invariant under $\theta$ if and only if $\nu$ coincides
with the initial law $\nu_{1}$ defined in \eqref{DefMarkovNu1}, and thus it
follows from classical ergodicity results for Markov chains (see, e.g., Hairer
\cite[Theorem 5.7]{Hairer2006Ergodic}) that $\theta$ is ergodic in $(\Omega,
\mathfrak{F}, \mathbb{P})$.

Now that we have defined the random discrete-time system
\eqref{MainSystDiscrRand} and provided the metric dynamical system $\theta$,
we can show that the pair $(\theta, \varphi_{\mathrm{rd}})$ defines a random
dynamical system.

\begin{proposition}
$(\theta, \varphi_{\mathrm{rd}})$ is a discrete-time random dynamical system
over $(\Omega, \mathfrak{F},\allowbreak\mathbb{P})$.
\end{proposition}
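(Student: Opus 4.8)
The plan is to check, against the definition of a discrete-time random dynamical system (cf.\ Arnold \cite{Arnold1998Random}), the two required ingredients: a metric dynamical system on the base, and a measurable cocycle over it. The base part is already in place, since it has been shown above that $\theta$ defined by \eqref{DefiTheta} is a measure-preserving map of $(\Omega, \mathfrak{F}, \mathbb{P})$ (and of $(\Omega_{0}, \mathfrak{F}, \mathbb{P})$). It therefore remains to verify that $\varphi_{\mathrm{rd}}$ is a measurable linear cocycle over $\theta$, and the whole argument will rest on expressing $\varphi_{\mathrm{rd}}$ through a single ``one-step'' map.

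First I would unwind \eqref{DiscreteFlow}: since $\pmb\alpha(\omega)(s_{n}(\omega)) = i_{n+1}$ for $\omega = (i_{n}, t_{n})_{n=1}^{\infty} \in \Omega_{0}$, an immediate induction on $n$ gives
\[
\varphi_{\mathrm{rd}}(0; x_{0}, \omega) = x_{0}, \qquad \varphi_{\mathrm{rd}}(n; x_{0}, \omega) = e^{L_{i_{n}} t_{n}} e^{L_{i_{n-1}} t_{n-1}} \dotsm e^{L_{i_{1}} t_{1}} x_{0} \quad \text{for } n \in \mathbb{N}^{\ast}.
\]
Introducing the map $\Psi : \Omega_{0} \to \mathcal{M}_{d}(\mathbb{R})$, $\Psi((i_{n}, t_{n})_{n=1}^{\infty}) = e^{L_{i_{1}} t_{1}}$, and recalling that $\theta$ shifts the sequence $(i_{n}, t_{n})_{n}$, this rewrites as $\varphi_{\mathrm{rd}}(n; x_{0}, \omega) = \Psi(\theta^{n-1}\omega)\Psi(\theta^{n-2}\omega)\dotsm\Psi(\omega)\, x_{0}$ for $n \geq 1$. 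In particular each linear map $\varphi_{\mathrm{rd}}(n; \cdot, \omega)$ is invertible, being a product of matrix exponentials, so the cocycle is a linear one, as needed for the later application of the Multiplicative Ergodic Theorem.

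Next I would check measurability: $\Psi$ is measurable as the composition of the coordinate projection $\omega \mapsto (i_{1}, t_{1})$ with the continuous map $\underline{N} \times \mathbb{R}_{+} \ni (i, t) \mapsto e^{L_{i} t}$; since each $\theta^{k}$ is measurable, $(x_{0}, \omega) \mapsto \varphi_{\mathrm{rd}}(n; x_{0}, \omega)$ is measurable for every fixed $n$ as a finite product of measurable matrix-valued functions acting on $x_{0}$, and because $\mathbb{N}$ carries the discrete $\sigma$-algebra this yields joint measurability in $(n, x_{0}, \omega)$. For the cocycle property, $\varphi_{\mathrm{rd}}(0; \cdot, \omega) = \id$ is clear, and for $m, n \in \mathbb{N}$ and $\omega \in \Omega_{0}$ one uses $\theta(\Omega_{0}) = \Omega_{0}$ together with $\Psi(\theta^{m+k}\omega) = \Psi(\theta^{k}(\theta^{m}\omega))$ to split the product at index $m$ and obtain
\[
\varphi_{\mathrm{rd}}(n+m; x_{0}, \omega) = \bigl[\Psi(\theta^{n-1}(\theta^{m}\omega)) \dotsm \Psi(\theta^{m}\omega)\bigr]\varphi_{\mathrm{rd}}(m; x_{0}, \omega) = \varphi_{\mathrm{rd}}\bigl(n; \varphi_{\mathrm{rd}}(m; x_{0}, \omega), \theta^{m}\omega\bigr).
\]
The only delicate points are the index bookkeeping in this last display and the invariance $\theta(\Omega_{0}) = \Omega_{0}$ needed to keep $\varphi_{\mathrm{rd}}$ well-defined along the orbit; I do not expect a genuine obstacle, the statement being essentially a reformulation of \eqref{DiscreteFlow}.
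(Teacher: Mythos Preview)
Your proposal is correct and follows essentially the same route as the paper: both reduce the claim to verifying the cocycle identity for $\varphi_{\mathrm{rd}}$ over the already-established metric dynamical system $\theta$. The paper proves the identity by induction on $n$ directly from the recursion \eqref{DiscreteFlow}, whereas you first unwind \eqref{DiscreteFlow} into the explicit product $\varphi_{\mathrm{rd}}(n;x_{0},\omega)=\Psi(\theta^{n-1}\omega)\dotsm\Psi(\omega)x_{0}$ and then split the product at index $m$; these are two presentations of the same computation. Your version is slightly more complete in that you address measurability and invertibility explicitly (the paper is silent on measurability here, and introduces your generator $\Psi$ only later, under the name $L$, when applying the Multiplicative Ergodic Theorem). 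One cosmetic caveat: the symbol $\Psi$ is reserved in the paper for the Oseledets limit in Proposition~\ref{MET}\ref{METPsiExists}, so you may want to rename your one-step map.
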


\begin{proof}
Since $\theta$ is a discrete-time metric dynamical system over $(\Omega,
\mathfrak{F}, \mathbb{P})$, one is only left to show that $\varphi
_{\mathrm{rd}}$ satisfies the cocycle property
\begin{equation}
\label{VarphiCocycle}\varphi_{\mathrm{rd}}(n+m;x_{0},\omega)=\varphi
_{\mathrm{rd}}(n;\varphi_{\mathrm{rd}}(m;x_{0},\omega),\theta^{m}(\omega)),\quad\forall n,m\in\mathbb{N},\;\forall x_{0}\in\mathbb{R}^{d},\;\forall\omega\in\Omega_{0}.
\end{equation}
Let $\omega=(i_{n},t_{n})_{n=1}^{\infty}\in\Omega_{0}$. Then it follows
immediately from the definitions of $\pmb\alpha$ and $s_{n}$ that, for
$n,m\in\mathbb{N}$,
\begin{align*}
s_{n}(\theta^{m}(\omega))  &  =\sum_{k=1}^{n}t_{k+m}=\sum_{k=m+1}^{m+n}
t_{k}=s_{n+m}(\omega)-s_{m}(\omega),\displaybreak[0]\\
\pmb\alpha(\theta^{m}(\omega))(s_{n}(\theta^{m}(\omega)))  &  =i_{n+m} =
\pmb\alpha(\omega)(s_{n+m}(\omega)).
\end{align*}
We prove \eqref{VarphiCocycle} by induction on $n$. When $n=0$,
\eqref{VarphiCocycle} is clearly satisfied for every $m\in\mathbb{N}$,
$x_{0}\in\mathbb{R}^{d}$, and $\omega\in\Omega_{0}$. Suppose now that
$n\in\mathbb{N}$ is such that \eqref{VarphiCocycle} is satisfied for every
$m\in\mathbb{N}$, $x_{0}\in\mathbb{R}^{d}$, and $\omega\in\Omega_{0}$. Using
\eqref{DiscreteFlow}, we obtain
\begin{align*}
&  \varphi_{\mathrm{rd}}(n+1;\varphi_{\mathrm{rd}}(m;x_{0},\omega),\theta
^{m}(\omega))\displaybreak[0]\\
{}={}  &  \Phi_{s_{n+1}(\theta^{m}(\omega))-s_{n}(\theta^{m}(\omega
))}^{\pmb\alpha(\theta^{m}(\omega))(s_{n}(\theta^{m}(\omega)))} \varphi
_{\mathrm{rd}}(n;\varphi_{\mathrm{rd}}(m;x_{0},\omega),\theta^{m}(\omega))
\displaybreak[0]\\
{}={}  &  \Phi_{s_{n+m+1}(\omega)-s_{n+m}(\omega)}^{\pmb\alpha(\omega
)(s_{n+m}(\omega))} \varphi_{\mathrm{rd}}(n+m;x_{0},\omega) =\varphi
_{\mathrm{rd}}(n+m+1;x_{0},\omega),
\end{align*}
which concludes the proof of \eqref{VarphiCocycle}.
\end{proof}

We now compare the asymptotic behavior of \eqref{MainSystContRand} and
\eqref{MainSystDiscrRand} by considering the relation between the Lyapunov
exponents $\lambda_{\mathrm{rc}}(x_{0}, \omega)$ and $\lambda_{\mathrm{rd}}(x_{0}, \omega)$ of the continuous- and discrete-time systems. The following result can be easily obtained from the ergodicity of $\theta$ and Birkhoff's Ergodic Theorem.

\begin{proposition}
\label{PropRegularAlpha} For almost every $\omega\in\Omega$, one has
\begin{equation}
\label{LimitSnOverN}\lim_{n\rightarrow\infty}\frac{s_{n}(\omega)}{n}=\sum_{i=1}^{N}p_{i} \int_{\mathbb{R}_{+}}t\diff\mu_{i}(t)=\sum_{i=1}^{N}p_{i}\tau_{i}=:m.
\end{equation}

\end{proposition}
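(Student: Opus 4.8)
The plan is to recognize the sum $s_n(\omega) = \sum_{k=1}^n t_k$ as a Birkhoff-type ergodic average and apply the Birkhoff Ergodic Theorem relative to the ergodic metric dynamical system $(\Omega, \mathfrak{F}, \mathbb{P}, \theta)$ established just above. Concretely, let $g:\Omega \to \mathbb{R}_+$ be the measurable function $g(\omega) = t_1$, i.e.\ $g = \pi_2 \circ x_1$, the second coordinate of the first component of $\omega = (i_n, t_n)_{n=1}^\infty$. Then $g \circ \theta^{k-1}(\omega) = t_k$, so that $s_n(\omega) = \sum_{k=1}^n g(\theta^{k-1}(\omega))$, and the quantity $s_n(\omega)/n$ is exactly the $n$-th Birkhoff average of $g$ along the orbit of $\omega$ under $\theta$.

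First I would check that $g \in L^1(\Omega, \mathbb{P})$: by Definition \ref{DefOmegaHat} the law of $x_1$ under $\mathbb{P}$ is $\nu_1(\{i\}\times U) = p_i \mu_i(U)$, so $\int_\Omega g \diff\mathbb{P} = \sum_{i=1}^N p_i \int_{\mathbb{R}_+} t \diff\mu_i(t) = \sum_{i=1}^N p_i \tau_i = m$, which is finite since each $\mu_i$ has finite expectation $\tau_i \in (0,\infty)$ and there are finitely many indices. This is the heart of the matter and the only real computation — it is a one-line consequence of the product structure of $\mathbb{P}$ (only the first-coordinate marginal matters). Then, since $\theta$ is ergodic with respect to $\mathbb{P}$ (by the Corollary above), Birkhoff's ergodic theorem yields
\[
\lim_{n\to\infty} \frac{s_n(\omega)}{n} = \lim_{n\to\infty} \frac{1}{n}\sum_{k=1}^n g(\theta^{k-1}(\omega)) = \int_\Omega g \diff\mathbb{P} = \sum_{i=1}^N p_i \int_{\mathbb{R}_+} t \diff\mu_i(t) = \sum_{i=1}^N p_i \tau_i
\]
for $\mathbb{P}$-almost every $\omega \in \Omega$, which is precisely \eqref{LimitSnOverN}. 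Finally, I would note that the convergence holds on a full-measure subset of $\Omega$ which we may intersect with $\Omega_0$ (itself of full measure), so the statement makes sense with $s_n(\omega)$ well-defined.

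I do not expect any serious obstacle here: the only points requiring care are (i) identifying $s_n/n$ as a genuine Birkhoff average, which is immediate from $s_n(\omega) = \sum_{k=1}^n t_k$ and the definition \eqref{DefiTheta} of the shift $\theta$ shifting the sequence indices by one, and (ii) the integrability computation, which as noted reduces to the first-coordinate marginal of $\mathbb{P}$ being $\nu_1$. Both ergodicity of $\theta$ and the fact that $\mathbb{P}$ is $\theta$-invariant have already been established in the excerpt, so no further dynamical input is needed.
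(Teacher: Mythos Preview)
Your proposal is correct and follows essentially the same approach as the paper: define the first-coordinate time function (the paper calls it $f$, you call it $g$), recognize $s_n/n$ as its Birkhoff average under the ergodic shift $\theta$, and compute $\int_\Omega g\diff\mathbb{P}=\sum_i p_i\tau_i$ from the first-coordinate marginal. The paper's proof is slightly terser (it does not spell out the $L^1$ check or the intersection with $\Omega_0$), but the argument is the same.
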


The next result provides the relation between $\lambda_{\mathrm{rc}}$ and
$\lambda_{\mathrm{rd}}$ in terms of the quantity $m$ defined in \eqref{LimitSnOverN}.

\begin{proposition}
\label{PropLambdaCD} For every $x_{0} \in\mathbb{R}^{d} \setminus\{0\}$ and
almost every $\omega\in\Omega$, the Lyapunov exponents of the continuous- and
discrete-time systems \eqref{MainSystContRand} and \eqref{MainSystDiscrRand},
given by \eqref{ContLyap} and \eqref{DiscrLyap}, are related by
\[
\lambda_{\mathrm{rd}}(x_{0}, \omega) = m \lambda_{\mathrm{rc}}(x_{0},
\omega).
\]

\end{proposition}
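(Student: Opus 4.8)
The plan is to exploit the fact that the discrete-time trajectory is just the continuous-time trajectory sampled at the switching times $s_n(\omega)$, so that the growth rates differ only by the asymptotic time-scale factor $m = \lim_n s_n(\omega)/n$ given by Proposition~\ref{PropRegularAlpha}. Fix $x_0 \in \mathbb{R}^d \setminus \{0\}$ and work on the full-measure set of $\omega \in \Omega$ where both the conclusion of Proposition~\ref{PropRegularAlpha} holds and $\pmb\alpha(\omega)$ is well-defined (i.e.\ $\omega \in \Omega_0$). The identity $\varphi_{\mathrm{rd}}(n; x_0, \omega) = \varphi_{\mathrm{rc}}(s_n(\omega); x_0, \omega)$ immediately gives
\[
\lambda_{\mathrm{rd}}(x_0,\omega) = \limsup_{n\to\infty} \frac{1}{n}\log\norm{\varphi_{\mathrm{rc}}(s_n(\omega);x_0,\omega)} = \limsup_{n\to\infty} \frac{s_n(\omega)}{n}\cdot\frac{1}{s_n(\omega)}\log\norm{\varphi_{\mathrm{rc}}(s_n(\omega);x_0,\omega)}.
\]
Since $s_n(\omega)/n \to m \in (0,\infty)$ and $s_n(\omega)\to\infty$ (both because $\omega\in\Omega_0$ and by Proposition~\ref{PropRegularAlpha}), the first factor can be pulled out of the $\limsup$, giving
\[
\lambda_{\mathrm{rd}}(x_0,\omega) = m\cdot\limsup_{n\to\infty}\frac{1}{s_n(\omega)}\log\norm{\varphi_{\mathrm{rc}}(s_n(\omega);x_0,\omega)} \le m\,\lambda_{\mathrm{rc}}(x_0,\omega),
\]
where the inequality holds because $(s_n(\omega))_{n\ge 1}$ is a particular divergent sequence of times and the continuous-time $\limsup$ is taken over all $t\to\infty$.

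For the reverse inequality, the main point — and the step I expect to be the only nontrivial one — is to control the continuous-time trajectory on each interval $(s_n(\omega), s_{n+1}(\omega)]$ in terms of its value at the left endpoint $s_n(\omega)$. By \eqref{ContinuousFlow}, for $t \in (s_n(\omega), s_{n+1}(\omega)]$ we have $\varphi_{\mathrm{rc}}(t;x_0,\omega) = \Phi^{\pmb\alpha(\omega)(s_n(\omega))}_{t-s_n(\omega)}\varphi_{\mathrm{rc}}(s_n(\omega);x_0,\omega)$, so
\[
\norm{\varphi_{\mathrm{rc}}(t;x_0,\omega)} \le \Big(\max_{i\in\underline N}\sup_{0\le s\le t_{n+1}}\norm{e^{L_i s}}\Big)\norm{\varphi_{\mathrm{rc}}(s_n(\omega);x_0,\omega)} \le e^{C\, t_{n+1}}\norm{\varphi_{\mathrm{rd}}(n;x_0,\omega)},
\]
where $C := \max_{i\in\underline N}\norm{L_i}$ (using $\norm{e^{L_i s}} \le e^{\norm{L_i}s}$ for $s\ge 0$). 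Taking logarithms, dividing by $t$, and using $t > s_n(\omega)$ together with $t_{n+1} = s_{n+1}(\omega) - s_n(\omega)$, we get for any $t$ in this interval
\[
\frac{1}{t}\log\norm{\varphi_{\mathrm{rc}}(t;x_0,\omega)} \le \frac{C\,t_{n+1}}{s_n(\omega)} + \frac{n}{s_n(\omega)}\cdot\frac{1}{n}\log\norm{\varphi_{\mathrm{rd}}(n;x_0,\omega)}.
\]
The term $C\,t_{n+1}/s_n(\omega)$ tends to $0$: indeed $t_{n+1}/n = s_{n+1}(\omega)/n - s_n(\omega)/n \to m - m = 0$ by Proposition~\ref{PropRegularAlpha}, and $n/s_n(\omega)\to 1/m$, so $t_{n+1}/s_n(\omega) = (t_{n+1}/n)(n/s_n(\omega))\to 0$. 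Letting $t\to\infty$ (so that the corresponding index $n = n(t) \to\infty$) and taking $\limsup$, the right-hand side is bounded above by $\frac{1}{m}\limsup_{n\to\infty}\frac{1}{n}\log\norm{\varphi_{\mathrm{rd}}(n;x_0,\omega)} = \frac{1}{m}\lambda_{\mathrm{rd}}(x_0,\omega)$, hence $\lambda_{\mathrm{rc}}(x_0,\omega) \le \frac{1}{m}\lambda_{\mathrm{rd}}(x_0,\omega)$. Combining the two inequalities yields $\lambda_{\mathrm{rd}}(x_0,\omega) = m\,\lambda_{\mathrm{rc}}(x_0,\omega)$.

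The only subtlety worth a sentence is the case $\lambda_{\mathrm{rc}}(x_0,\omega) = -\infty$ or $\lambda_{\mathrm{rd}}(x_0,\omega) = -\infty$: the inequalities above still make sense with the usual conventions in $[-\infty,\infty)$, since $m$ is a finite positive constant, and $m\cdot(-\infty) = -\infty$; also $\varphi_{\mathrm{rc}}(t;x_0,\omega)\ne 0$ for all $t$ because each $e^{L_i s}$ is invertible, so the logarithms are always well-defined (never $\log 0$). Thus no separate argument is needed for the degenerate case, and the proof is complete on the full-measure set described above.
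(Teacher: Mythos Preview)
Your approach is essentially the paper's: the first inequality is obtained identically, and for the reverse inequality you both control $\varphi_{\mathrm{rc}}$ on each interval $(s_n,s_{n+1}]$ via an exponential bound on the flows and then pass to the limit using $s_n/n\to m$. There is, however, one slip. The displayed inequality
\[
\frac{1}{t}\log\norm{\varphi_{\mathrm{rc}}(t;x_0,\omega)} \le \frac{C\,t_{n+1}}{s_n(\omega)} + \frac{n}{s_n(\omega)}\cdot\frac{1}{n}\log\norm{\varphi_{\mathrm{rd}}(n;x_0,\omega)}
\]
is not valid in general: replacing $1/t$ by the larger number $1/s_n(\omega)$ in the second term is an upper bound only when $\log\norm{\varphi_{\mathrm{rd}}(n;x_0,\omega)}\ge 0$. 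The paper sidesteps this sign issue by keeping $t$ in the denominator, writing $\frac{1}{t}=\frac{n_t}{t}\cdot\frac{1}{n_t}$, and showing $\frac{n_t}{t}\to\frac{1}{m}$ directly from the sandwich $\frac{n}{s_{n+1}(\omega)}\le\frac{n}{t}<\frac{n}{s_n(\omega)}$ --- exactly the estimate you already use to prove $t_{n+1}/s_n(\omega)\to 0$. With this one-line correction your argument is complete and coincides with the paper's.
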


\begin{proof}
Let us first show that $\lambda_{\mathrm{rd}}(x_{0}, \omega) \leq m
\lambda_{\mathrm{rc}}(x_{0}, \omega)$. For every $n \in\mathbb{N}^{\ast}$, one
has
\[
\frac{1}{n}\log\left\|  \varphi_{\mathrm{rd}}(n; x_{0}, \omega)\right\|  =
\frac{s_{n}(\omega)}{n}\frac{1}{s_{n}(\omega)} \log\left\|  \varphi
_{\mathrm{rc}}(s_{n}(\omega); x_{0}, \omega)\right\|  .
\]
Moreover
\[
\limsup_{n \to\infty} \frac{1}{s_{n}(\omega)} \log\left\|  \varphi
_{\mathrm{rc}}(s_{n}(\omega); x_{0}, \omega)\right\|  \leq\limsup_{t \to
\infty} \frac{1}{t} \log\left\|  \varphi_{\mathrm{rc}}(t; x_{0},
\omega)\right\|  ,
\]
and then the conclusion follows since $\frac{s_{n}(\omega)}{n} \to m$ as $n
\to\infty$ for almost every $\omega\in\Omega$.

We now turn to the proof of the inequality $\lambda_{\mathrm{rd}}(x_{0},
\omega) \geq m \lambda_{\mathrm{rc}}(x_{0}, \omega)$. Let $C, \gamma> 0$ be
such that $\left\|  \Phi_{t}^{i} x\right\|  \leq C e^{\gamma t} \left\|
x\right\|  $ for every $i\in\underline{N}$, $x\in\mathbb{R}^{d}$, and $t
\geq0$. For $x_{0} \in\mathbb{R}^{d} \setminus\{0\}$ and $t > 0$, let
$n_{t}\in\mathbb{N}$ be the unique integer such that $t \in\left(s_{n_{t}}(\omega),s_{{n_{t}}+1}(\omega)\right]  $, which is well-defined for almost
every $\omega\in\Omega$. Then
\begin{align}
\frac{1}{t}\log\left\|  \varphi_{\mathrm{rc}}(t; x_{0}, \omega)\right\|   &  =
\frac{1}{t} \log\left\|  \Phi_{t-s_{n_{t}}(\omega)}^{\pmb \alpha
(\omega)(s_{n_{t}}(\omega))}\varphi_{\mathrm{rc}}(s_{n_{t}}(\omega); x_{0},
\omega)\right\|  \displaybreak[0]\nonumber\\
&  = \frac{1}{t} \log\left\|  \Phi_{t-s_{n_{t}}(\omega)}^{\pmb\alpha
(\omega)(s_{n_{t}}(\omega))} \varphi_{\mathrm{rd}}(n_{t}; x_{0},
\omega)\right\|  \displaybreak[0]\nonumber\\
&  \leq\frac{\log C}{t} + \gamma\frac{t-s_{n_{t}}(\omega)}{t} + \frac{1}{t}
\log\left\|  \varphi_{\mathrm{rd}}(n_{t}; x_{0}, \omega)\right\|  .
\label{EstimLambdaC}\end{align}
Since $t \in\left(  s_{n_{t}}(\omega),s_{n_{t}+1}(\omega)\right]  $, one has,
for almost every $\omega\in\Omega$,
\begin{equation}
\label{BoundTime}0 \leq\frac{t - s_{n_{t}}(\omega)}{t} \leq\frac{s_{n_{t}+1}(\omega)}{s_{n_{t}}(\omega)} - 1 \xrightarrow[t \to \infty]{} 0,
\end{equation}
where we use \eqref{LimitSnOverN} to obtain that $\frac{s_{n_{t}+1}(\omega
)}{s_{n_{t}}(\omega)} \to1$ as $t \to\infty$. We write $\frac{1}{t} =
\frac{n_{t}}{t} \frac{1}{n_{t}}$. Since $t \in\left(  s_{n_{t}}(\omega),
s_{n_{t}+1}(\omega)\right]  $, one has $\frac{n_{t}}{t} \in\left[  \frac
{n_{t}}{s_{n_{t}+1}(\omega)}, \frac{n_{t}}{s_{n_{t}}(\omega)}\right)  $. Now
\[
\lim_{t \to\infty} \frac{n_{t}}{s_{n_{t}}(\omega)} = \frac{1}{m} \quad\text{
and } \quad\lim_{t \to\infty} \frac{n_{t}}{s_{n_{t}+1}(\omega)} = \lim_{t
\to\infty} \left(  \frac{n_{t}+1}{s_{n_{t}+1}(\omega)} - \frac{1}{s_{n_{t}+1}(\omega)}\right)  =\frac{1}{m},
\]
and thus $\frac{n_{t}}{t} \to\frac{1}{m}$ as $t \to\infty$. Using this fact
and inserting \eqref{BoundTime} into \eqref{EstimLambdaC}, one obtains the
conclusion of the theorem by letting $t \to\infty$.
\end{proof}

The next result relies on the ergodicity of $\theta$ to provide an
evaluation of the average time spent by $\pmb\alpha(\omega)$ in a given state
$i$, generalizing the corresponding classical ergodic theorem for
continuous-time Markov chains to our random model for $\pmb\alpha(\omega)$
(see, e.g., Norris \cite[Theorem 3.8.1]{Norris1998Markov}).

\begin{proposition}
\label{PropProportionTime} Let $i \in\underline N$. For almost every
$\omega\in\Omega$, one has
\[
\lim_{T \to\infty} \frac{\mathcal{L}\{t \in[0, T] \;|\; \pmb\alpha(\omega)(t)
= i\}}{T} = \frac{p_{i} \tau_{i}}{m},
\]
where $\mathcal{L}$ denotes the Lebesgue measure in $\mathbb{R}$.
\end{proposition}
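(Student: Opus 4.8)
The plan is to express the Lebesgue measure of the time spent in state $i$ up to time $T$ as a Birkhoff-type ergodic average over the shift $\theta$, and then apply Birkhoff's Ergodic Theorem together with the control on the renewal times already established in Proposition~\ref{PropRegularAlpha}. First I would introduce, for $\omega = (i_n, t_n)_{n=1}^\infty \in \Omega_0$, the function $g \colon \Omega_0 \to \mathbb{R}_+$ defined by $g(\omega) = t_1 \mathbf{1}_{\{i_1 = i\}}$ (the time spent in the first block if that block is in state $i$, and $0$ otherwise). Then $g \circ \theta^{k}(\omega) = t_{k+1} \mathbf{1}_{\{i_{k+1} = i\}}$, and for every $n \in \mathbb{N}^\ast$ one has
\[
\mathcal{L}\{t \in [0, s_n(\omega)] \suchthat \pmb\alpha(\omega)(t) = i\} = \sum_{k=0}^{n-1} g \circ \theta^{k}(\omega).
\]
Since $g$ is integrable — indeed $0 \le g \le f$ with $f$ as in the proof of Proposition~\ref{PropRegularAlpha}, and $\int_\Omega f \diff\mathbb{P} = m < \infty$ — Birkhoff's Ergodic Theorem and the ergodicity of $\theta$ give, for almost every $\omega$,
\[
\lim_{n \to \infty} \frac{1}{n} \sum_{k=0}^{n-1} g \circ \theta^{k}(\omega) = \int_\Omega g \diff\mathbb{P} = \sum_{j=1}^{N} p_j \int_{\mathbb{R}_+} t \mathbf{1}_{\{j = i\}} \diff\mu_j(t) = p_i \tau_i,
\]
the value of the integral being computed directly from the definition of $\mathbb{P}$ (only the first coordinate matters).

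Next I would pass from the subsequence of switching times $s_n(\omega)$ to a continuous limit in $T$. For general $T > 0$, let $n_T \in \mathbb{N}$ be the unique integer with $T \in (s_{n_T}(\omega), s_{n_T+1}(\omega)]$, well-defined for almost every $\omega$. Then
\[
\frac{1}{n_T}\sum_{k=0}^{n_T-1} g \circ \theta^{k}(\omega) \le \mathcal{L}\{t \in [0,T] \suchthat \pmb\alpha(\omega)(t) = i\} \le \frac{1}{n_T}\sum_{k=0}^{n_T}\! g \circ \theta^{k}(\omega) \cdot \frac{n_T+1}{1},
\]
so, dividing by $T$ and writing $\frac{1}{T} = \frac{n_T}{T}\cdot\frac{1}{n_T}$, both the lower and upper bounds converge to $p_i \tau_i \cdot \frac{1}{m}$: the ergodic averages converge to $p_i \tau_i$ by the previous step, and $\frac{n_T}{T} \to \frac{1}{m}$ as $T \to \infty$ for almost every $\omega$ by exactly the argument in the proof of Proposition~\ref{PropLambdaCD} (using $\frac{n_T}{s_{n_T}(\omega)} \to \frac{1}{m}$ and $\frac{n_T}{s_{n_T+1}(\omega)} \to \frac{1}{m}$, both consequences of \eqref{LimitSnOverN}). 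A squeeze then yields $\lim_{T\to\infty} \frac{1}{T}\mathcal{L}\{t \in [0,T] \suchthat \pmb\alpha(\omega)(t) = i\} = \frac{p_i \tau_i}{m}$.

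I expect the only mildly delicate point to be bookkeeping the endpoints of the interval $[0,T]$ carefully — i.e.\ justifying cleanly the sandwich between the partial sums up to $n_T - 1$ and up to $n_T$, and controlling the contribution of the last, possibly incomplete block $[s_{n_T}(\omega), T]$, whose length is at most $t_{n_T+1}$ and hence is $o(T)$ since $\frac{s_{n_T+1}(\omega) - s_{n_T}(\omega)}{T} \to 0$, again by \eqref{LimitSnOverN} as in \eqref{BoundTime}. Everything else is a routine combination of Birkhoff's theorem with the renewal estimates already in hand; no new ideas beyond those in Propositions~\ref{PropRegularAlpha} and~\ref{PropLambdaCD} are needed.
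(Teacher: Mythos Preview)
Your proposal is correct and follows essentially the same route as the paper: define $g(\omega)=t_1\mathbf{1}_{\{i_1=i\}}$ (the paper calls it $\varphi_i$), apply Birkhoff's Ergodic Theorem to get $\frac{1}{n}\sum_{k=0}^{n-1}g\circ\theta^k(\omega)\to p_i\tau_i$, identify this sum with $\mathcal{L}\{t\in[0,s_n(\omega)]:\pmb\alpha(\omega)(t)=i\}$, and then sandwich for general $T$ using Proposition~\ref{PropRegularAlpha}. The only remark is that your displayed sandwich inequality is garbled (stray $\tfrac{1}{n_T}$ and $\tfrac{n_T+1}{1}$ factors); the clean version is simply $\sum_{k=0}^{n_T-1}g\circ\theta^k(\omega)\le \mathcal{L}\{t\in[0,T]:\pmb\alpha(\omega)(t)=i\}\le \sum_{k=0}^{n_T}g\circ\theta^k(\omega)$, after which dividing by $T$ and using $\tfrac{n_T}{T}\to\tfrac{1}{m}$ gives the result exactly as you describe.
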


\begin{proof}
Fix $i \in\underline N$. Let $\varphi_{i}: \Omega\to\mathbb{R}_{+}$ be given
by
\[
\varphi_{i}((i_{n}, t_{n})_{n=1}^{\infty}) = \begin{dcases*}
t_{1}, & if $i_{1} = i$, \\
0, & otherwise.
\end{dcases*}
\]
Then, by Birkhoff's Ergodic Theorem, one has, for almost every $\omega
\in\Omega$,
\begin{equation}
\label{LimitBirkhoffVarphiK}\lim_{n \to\infty} \frac{1}{n} \sum_{j=0}^{n-1}
\varphi_{i}(\theta^{j} \omega) = \int_{\Omega} \varphi_{i}(\omega)
\diff \mathbb{P}(\omega) = p_{i} \tau_{i}.
\end{equation}
On the other hand, by definition of $\pmb\alpha$, for almost every $\omega=
(i_{n}, t_{n})_{n=1}^{\infty}\in\Omega$,
\[
\sum_{j=0}^{n-1} \varphi_{i}(\theta^{j} \omega) = \sum_{\substack{j=1 \\i_{j}
= i}}^{n} t_{j} = \mathcal{L}\{t \in[0, s_{n}(\omega)] \;|\; \pmb\alpha
(\omega)(t) = i\}.
\]
Hence it follows from Proposition \ref{PropRegularAlpha} and
\eqref{LimitBirkhoffVarphiK} that, for almost every $\omega\in\Omega$,
\begin{equation}
\label{AverageTimeHolds}\lim_{n \to\infty} \frac{\mathcal{L}\{t \in[0,
s_{n}(\omega)] \;|\; \pmb\alpha(\omega)(t) = i\}}{s_{n}(\omega)} = \lim_{n
\to\infty} \frac{n}{s_{n}(\omega)} \frac{1}{n} \sum_{j=0}^{n-1} \varphi
_{i}(\theta^{j} \omega) = \frac{p_{i} \tau_{i}}{m}.
\end{equation}
Let $\omega\in\Omega$ be such that \eqref{AverageTimeHolds} holds and take $T
\in\mathbb{R}_{+}$. Choose $n_{T} \in\mathbb{N}$ such that $s_{n_{T}}(\omega)
\leq T < s_{n_{T} + 1}(\omega)$. Then
\[
\frac{1}{T}\mathcal{L}\{t \in[0, T] \;|\; \pmb\alpha(\omega)(t) = i\}
\leq\frac{1}{s_{n_{T}}(\omega)}\mathcal{L}\{t \in[0, s_{n_{T} + 1}(\omega)]
\;|\; \pmb\alpha(\omega)(t) = i\}
\]
and
\[
\frac{1}{T}\mathcal{L}\{t \in[0, T] \;|\; \pmb\alpha(\omega)(t) = i\}
\geq\frac{1}{s_{n_{T} + 1}(\omega)}\mathcal{L}\{t \in[0, s_{n_{T}}(\omega)]
\;|\; \pmb\alpha(\omega)(t) = i\}.
\]
The conclusion of the proposition then follows since, by Proposition
\ref{PropRegularAlpha}, $\frac{s_{n+1}(\omega)}{s_{n}(\omega)} \to1$ as $n
\to\infty$ for almost every $\omega\in\Omega$.
\end{proof}

\begin{remark}
The choice of $s_{n}$ in Definition \ref{DefiVarphiRd} is not unique, and one
might be interested in other possible choices. The times $s_{n}(\omega)$
correspond to the transitions of the
discrete-time Markov chain in $\underline N \times \mathbb R_+$ from Section \ref{SecMarkov}.
However, if some of the diagonal elements of $M$ are non-zero, then the
discrete part of the Markov chain, i.e., its component in $\underline{N}$, may
switch from a certain state to itself. In practical situations, it may be
possible to observe only switches between different states, and another
possible choice for $s_{n}(\omega)$ that may be of practical interest is to
consider only the times corresponding to such non-trivial switches. Defining
$\theta$ as the shift to the next different state, $\theta$ defines a metric
dynamical system if we suppose that, instead of having $p M = p$, we have $p
\tilde M = p$, where $\tilde M_{ij} = \frac{M_{ij}}{1 - M_{ii}}$ for $i, j
\in\underline N$ with $i \not = j$ and $\tilde M_{ii} = 0$ for $i
\in\underline N$. (Notice that $M_{ii} \not = 1$ for every $i \in\underline N$
since $M$ is irreducible.) The counterparts of the previous results can be
proved in this framework with no extra difficulty.
\end{remark}

\begin{remark}
The fact that systems \eqref{MainSystSwitch} and \eqref{MainSystDiscr} are
linear has been used only in the proof of Proposition \ref{PropLambdaCD},
where one uses an exponential bound on the growth of the flows $\Phi^{i}_{t} =
e^{L_{i} t}$, namely that there exist constants $C, \gamma> 0$ such that
$\left\|  e^{L_{i} t}\right\|  \leq C e^{\gamma t}$ for every $t \geq0$ and $i
\in\underline N$. If we consider, instead of system \eqref{MainSystSwitch},
the nonlinear switched system
\[
\dot x(t) = f_{\alpha(t)}(x(t)),
\]
where $f_{1}, \dotsc, f_{N}$ are complete vector fields generating flows
$\Phi^{1}, \dotsc, \Phi^{N}$, and modify the dis\-cre\-te-time system
\eqref{MainSystDiscr} accordingly, all the previous results remain true, with
the same proofs, under the additional assumption that there exist constants
$C, \gamma> 0$ such that $\left\|  \Phi^{i}_{t} x\right\|  \leq C e^{\gamma t}
\left\|  x\right\|  $ for every $t \geq0$, $i \in\underline N$, and $x
\in\mathbb{R}^{d}$. However, the next results do not generalize to the
nonlinear framework.
\end{remark}

In order to conclude this section, we apply the discrete-time Multiplicative
Ergodic Theorem (see, e.g., Arnold \cite[Theorem 3.4.1]{Arnold1998Random}) in
the one-sided invertible case to system \eqref{MainSystDiscrRand} and we use
Proposition \ref{PropLambdaCD} to obtain that several of its conclusions also
hold for the continuous-time system \eqref{MainSystContRand}.

Let $L : \Omega\to\mathcal{M}_{d}(\mathbb{R})$ be the function defined for
$\omega= (i_{n},t_{n})_{n=1}^{\infty}$ by $L(\omega) = e^{L_{i_{1}}t_{1}}$, so
that $\varphi_{\mathrm{rd}}(n; x_{0}, \omega) = L(\theta^{n-1} \omega)
\varphi_{\mathrm{rd}}(n-1; x_{0}, \omega)$ for every $x_{0} \in\mathbb{R}^{d}$, $n \in\mathbb{N}^{\ast}$, and almost every $\omega\in\Omega$. For $n
\in\mathbb{N}$ and almost every $\omega\in\Omega$, we denote $\mathit{\Phi}(n,
\omega)$ the linear operator defined by $\mathit{\Phi}(n, \omega) x =
\varphi_{\mathrm{rd}}(n; x, \omega)$ for every $x \in\mathbb{R}^{d}$, which is
thus given by $\mathit{\Phi}(n, \omega) = e^{L_{i_{n}} t_{n}} \dotsm
e^{L_{i_{1}} t_{1}}$ for $\omega= (i_{j}, t_{j})_{j=1}^{\infty} \in\Omega$ and
$n \in\mathbb{N}^{\ast}$.

\begin{proposition}
\label{MET} There exists a measurable subset $\widehat\Omega\subset\Omega$ of
full $\mathbb{P}$-measure and invariant under $\theta$ such that

\begin{enumerate}
\item \label{METPsiExists} for every $\omega\in\widehat\Omega$, the limit
$\Psi(\omega) = \lim_{n \to\infty}\left(  \mathit{\Phi}(n, \omega
)^{{\mathrm{T}}}\mathit{\Phi}(n, \omega)\right)  ^{1/2n}$ exists and is a
positive definite matrix;

\item there exist an integer $q \in\underline d$ and $q$ integers $d_{1} >
\dotsb> d_{q}$ such that, for every $\omega\in\widehat\Omega$, there exist $q$
vector subspaces $V_{1}(\omega), \dotsc, V_{q}(\omega)$ with respective
dimensions $d_{1} > \dotsb> d_{q}$ such that
\[
V_{q}(\omega) \subset\dotsb\subset V_{1}(\omega) = \mathbb{R}^{d},
\]
and $L(\omega) V_{i}(\omega) = V_{i}(\theta(\omega))$ for every $i
\in\underline{q}$;

\item for every $x_{0} \in\mathbb{R}^{d} \setminus\{0\}$ and $\omega
\in\widehat\Omega$, the Lyapunov exponents $\lambda_{\mathrm{rd}}(x_{0},
\omega)$ and $\lambda_{\mathrm{rc}}(x_{0}, \omega)$ exist as limits, i.e.,
\begin{align*}
\lambda_{\mathrm{rd}}(x_{0}, \omega)  &  = \lim_{n \to\infty} \frac{1}{n}
\log\left\|  \varphi_{\mathrm{rd}}(n; x_{0}, \omega)\right\|  ,
\displaybreak[0]\\
\lambda_{\mathrm{rc}}(x_{0}, \omega)  &  = \lim_{t \to\infty} \frac{1}{t}
\log\left\|  \varphi_{\mathrm{rc}}(t; x_{0}, \omega)\right\|  ;
\end{align*}

\item \label{METLyapIff} there exist real numbers $\lambda_{1}^{\mathrm{d}} >
\dotsb> \lambda_{q}^{\mathrm{d}}$ and $\lambda_{1}^{\mathrm{c}} > \dotsb>
\lambda_{q}^{\mathrm{c}}$ such that, for every $i \in\underline{q}$ and
$\omega\in\widehat\Omega$,
\[
\lambda_{\mathrm{rd}}(x_{0}, \omega) = \lambda_{i}^{\mathrm{d}} \iff
\lambda_{\mathrm{rc}}(x_{0}, \omega) = \lambda_{i}^{\mathrm{c}} \iff x_{0} \in
V_{i}(\omega)\setminus V_{i+1}(\omega),
\]
where $V_{q + 1}(\omega) = \{0\}$;

\item \label{METSpecPsi} for every $\omega\in\widehat{\Omega}$, the
eigenvalues of $\Psi(\omega)$ are $e^{\lambda_{1}^{\mathrm{d}}}>\dotsb
>e^{\lambda_{q}^{\mathrm{d}}}$, and their respective algebraic multiplicities are $m_i = d_i - d_{i+1}$, with $d_{q+1} = 0$.
\end{enumerate}
\end{proposition}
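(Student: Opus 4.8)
The plan is to apply the one-sided invertible case of the Multiplicative Ergodic Theorem (Arnold \cite[Theorem 3.4.1]{Arnold1998Random}) to the discrete-time random dynamical system $(\theta,\varphi_{\mathrm{rd}})$ constructed above, and then to transfer its conclusions to the continuous-time system by means of Proposition \ref{PropLambdaCD}. The first step is to verify the integrability hypotheses. With $L(\omega)=e^{L_{i_{1}}t_{1}}$ and $\gamma:=\max_{i\in\underline{N}}\norm{L_{i}}$, one has $\norm{L(\omega)}\le e^{\gamma t_{1}}$ and $\norm{L(\omega)^{-1}}=\norm{e^{-L_{i_{1}}t_{1}}}\le e^{\gamma t_{1}}$, hence $\log^{+}\norm{L(\omega)}\le\gamma t_{1}$ and $\log^{+}\norm{L(\omega)^{-1}}\le\gamma t_{1}$; since $\int_{\Omega}t_{1}\diff\mathbb{P}(\omega)=\sum_{i=1}^{N}p_{i}\tau_{i}=m<\infty$ (the integral computed in the proof of Proposition \ref{PropRegularAlpha}), both $\log^{+}\norm{L}$ and $\log^{+}\norm{L^{-1}}$ lie in $L^{1}(\Omega,\mathbb{P})$, and the theorem applies to the cocycle $\mathit{\Phi}(n,\omega)=L(\theta^{n-1}\omega)\dotsm L(\omega)$ over the ergodic metric dynamical system $\theta$.

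The theorem produces a $\theta$-invariant measurable set $\widehat{\Omega}$ of full measure on which $\Psi(\omega)=\lim_{n\to\infty}(\mathit{\Phi}(n,\omega)^{\transp}\mathit{\Phi}(n,\omega))^{1/2n}$ exists and is symmetric positive definite, which is (i). Since $\theta$ is ergodic, the number $q$ of distinct eigenvalues of $\Psi(\omega)$, these eigenvalues $e^{\lambda_{1}^{\mathrm{d}}}>\dotsb>e^{\lambda_{q}^{\mathrm{d}}}$, and the dimensions of their eigenspaces $U_{1}(\omega),\dotsc,U_{q}(\omega)$ are all non-random; setting $V_{i}(\omega):=U_{i}(\omega)\oplus\dotsb\oplus U_{q}(\omega)$ gives the strictly decreasing flag $\mathbb{R}^{d}=V_{1}(\omega)\supsetneq\dotsb\supsetneq V_{q}(\omega)\supsetneq V_{q+1}(\omega)=\{0\}$, with $d_{i}:=\dim V_{i}(\omega)$ non-random and $d_{1}>\dotsb>d_{q}$, and with $L(\omega)V_{i}(\omega)=\mathit{\Phi}(1,\omega)V_{i}(\omega)=V_{i}(\theta\omega)$; this is (ii). The theorem also gives, for $\omega\in\widehat{\Omega}$ and $x_{0}\in V_{i}(\omega)\setminus V_{i+1}(\omega)$, that $\lim_{n\to\infty}\frac{1}{n}\log\norm{\varphi_{\mathrm{rd}}(n;x_{0},\omega)}$ exists and equals $\lambda_{i}^{\mathrm{d}}$; since the sets $V_{i}(\omega)\setminus V_{i+1}(\omega)$, $i\in\underline{q}$, partition $\mathbb{R}^{d}\setminus\{0\}$, this is the discrete-time part of (iii) together with $\lambda_{\mathrm{rd}}(x_{0},\omega)=\lambda_{i}^{\mathrm{d}}\iff x_{0}\in V_{i}(\omega)\setminus V_{i+1}(\omega)$. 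Finally, the algebraic multiplicity of $e^{\lambda_{i}^{\mathrm{d}}}$ as an eigenvalue of $\Psi(\omega)$ is $\dim U_{i}(\omega)=\dim V_{i}(\omega)-\dim V_{i+1}(\omega)=d_{i}-d_{i+1}$ with $d_{q+1}=0$, which is (v).

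It remains to pass to continuous time. Intersect $\widehat{\Omega}$ further with the $\theta$-invariant full-measure set $\{\omega:\lim_{n\to\infty}s_{n}(\omega)/n=m\}$, on which --- as its proof shows --- the conclusion of Proposition \ref{PropLambdaCD} also holds, and set $\lambda_{i}^{\mathrm{c}}:=\lambda_{i}^{\mathrm{d}}/m$, so that $\lambda_{1}^{\mathrm{c}}>\dotsb>\lambda_{q}^{\mathrm{c}}$ since $m>0$. Proposition \ref{PropLambdaCD} gives $\lambda_{\mathrm{rd}}(x_{0},\omega)=m\,\lambda_{\mathrm{rc}}(x_{0},\omega)$ on $\widehat{\Omega}$; moreover its proof bounds $\limsup_{t\to\infty}\frac{1}{t}\log\norm{\varphi_{\mathrm{rc}}(t;x_{0},\omega)}$ from above and --- by writing $\varphi_{\mathrm{rd}}(n_{t};x_{0},\omega)=\Phi^{\pmb\alpha(\omega)(s_{n_{t}}(\omega))}_{-(t-s_{n_{t}}(\omega))}\varphi_{\mathrm{rc}}(t;x_{0},\omega)$ and applying the same exponential bound --- bounds $\liminf_{t\to\infty}\frac{1}{t}\log\norm{\varphi_{\mathrm{rc}}(t;x_{0},\omega)}$ from below, both by $\frac{1}{m}\lim_{n\to\infty}\frac{1}{n}\log\norm{\varphi_{\mathrm{rd}}(n;x_{0},\omega)}$; hence $\lambda_{\mathrm{rc}}(x_{0},\omega)$ exists as a true limit on $\widehat{\Omega}$ and $\lambda_{\mathrm{rc}}(x_{0},\omega)=\lambda_{i}^{\mathrm{c}}\iff\lambda_{\mathrm{rd}}(x_{0},\omega)=\lambda_{i}^{\mathrm{d}}\iff x_{0}\in V_{i}(\omega)\setminus V_{i+1}(\omega)$, completing (iii) and (iv). I expect this last step to be the only genuinely nonroutine point: Proposition \ref{PropLambdaCD} is stated only for the $\limsup$-definitions of the Lyapunov exponents, so obtaining that $\lambda_{\mathrm{rc}}$ is a true limit on $\widehat{\Omega}$ requires revisiting its proof to extract the two-sided estimate above; the rest --- checking integrability, translating Arnold's notation into ours, and using ergodicity of $\theta$ to make $q$, the $d_{i}$, and the $\lambda_{i}^{\mathrm{d}}$ non-random --- is routine.
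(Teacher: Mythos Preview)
Your proposal is correct and follows essentially the same approach as the paper: verify the integrability hypotheses $\log^{+}\norm{L^{\pm1}}\in L^{1}(\Omega,\mathbb{P})$, apply Arnold's one-sided invertible Multiplicative Ergodic Theorem to the ergodic cocycle $(\theta,\mathit{\Phi})$ to obtain (i), (ii), (v) and the discrete-time parts of (iii) and (iv), set $\lambda_{i}^{\mathrm{c}}=\lambda_{i}^{\mathrm{d}}/m$ via Proposition~\ref{PropLambdaCD}, and then establish that $\lambda_{\mathrm{rc}}$ is a genuine limit by the two-sided estimate you describe. The paper carries out exactly this last step, writing the lower bound as $\norm{e^{L_{i}t}x}\geq C^{-1}e^{-\gamma t}\norm{x}$ (equivalent to your formulation via $\Phi^{\cdot}_{-(t-s_{n_{t}})}$) to get $\liminf_{t\to\infty}\frac{1}{t}\log\norm{\varphi_{\mathrm{rc}}(t;x_{0},\omega)}\geq\frac{1}{m}\lambda_{\mathrm{rd}}(x_{0},\omega)$.
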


\begin{proof}
Let us show that Multiplicative Ergodic Theorem can be applied to the random
dynamical system $(\theta,\varphi_{\mathrm{rd}})$. Recall that there are
$C\geq1$, $\gamma>0$ such that, for every $i\in\underline{N}$ and
$t\in\mathbb{R}$, $\left\Vert e^{L_{i}t}\right\Vert \leq Ce^{\gamma\left\vert
t\right\vert }$. Then, for $\omega=(i_{n},t_{n})_{n=1}^{\infty}\in\Omega_{0}$,
$\log^{+}\left\Vert L(\omega)^{\pm1}\right\Vert \leq\log C+\gamma t_{1}$, so
that
\[
\int_{\Omega}\log^{+}\left\Vert L(\omega)^{\pm1}\right\Vert \diff\mathbb{P}(\omega)\leq\log C+\gamma\sum_{i=1}^{N}p_{i}\tau_{i}<\infty.
\]
Then the Multiplicative Ergodic Theorem can be applied to $(\theta
,\varphi_{\mathrm{r}d})$, yielding all the conclusions for $\Psi$, $q$,
$d_{i}$, $V_{i}$, $\lambda_{\mathrm{rd}}(x_{0},\omega)$, and $\lambda
_{i}^{\mathrm{d}}$. The conclusions concerning $\lambda_{\mathrm{rc}}(x_{0},\omega)$ and $\lambda_{i}^{\mathrm{c}}$ in \ref{METLyapIff} follow from
Proposition \ref{PropLambdaCD}, with $\lambda_{i}^{\mathrm{c}}=\frac{1}{m}\lambda_{i}^{\mathrm{d}}$. One is now left to show that the Lyapunov
exponent $\lambda_{\mathrm{rc}}(x_{0},\omega)$ exists as a limit.

Notice that $\left\Vert e^{-L_{i}t}x\right\Vert \leq Ce^{\gamma t}\left\Vert
x\right\Vert $ for every $i\in\underline{N}$, $x\in\mathbb{R}^{d}$ and
$t\geq0$, and hence $\left\Vert e^{L_{i}t}x\right\Vert \geq C^{-1}e^{-\gamma
t}\left\Vert x\right\Vert $. Let $t>0$ and choose $n_{t}\in\mathbb{N}$ such
that $t\in\left(  s_{n_{t}}(\omega),s_{n_{t}+1}(\omega)\right]  $. Then,
proceeding as in \eqref{EstimLambdaC}, one gets
\[
\frac{1}{t}\log\left\Vert \varphi_{\mathrm{rc}}(t;x_{0},\omega)\right\Vert
\geq-\frac{\log C}{t}-\gamma\frac{t-s_{n_{t}}}{t}+\frac{1}{t}\log\left\Vert
\varphi_{\mathrm{rd}}(n_{t};x_{0},\omega)\right\Vert .
\]
Using \eqref{BoundTime}, we thus obtain that
\[
\liminf_{t\rightarrow\infty}\frac{1}{t}\log\left\Vert \varphi_{\mathrm{rc}}(t;x_{0},\omega)\right\Vert \geq\frac{1}{m}\lambda_{\mathrm{rd}}(x_{0},\omega) = \lambda_{\mathrm{rc}}(x_{0},\omega),
\]
which yields the existence of the limit.
\end{proof}

\section{The maximal Lyapunov exponent}

\label{SecMaxLyap}

We are interested in this section in the maximal Lyapunov exponents for
systems \eqref{MainSystContRand} and \eqref{MainSystDiscrRand}, i.e., the real
numbers $\lambda_{1}^{\mathrm{c}}$ and $\lambda_{1}^{\mathrm{d}}$ from
Proposition \ref{MET}\ref{METLyapIff}. We denote these numbers by
$\lambda_{\max}^{\mathrm{c}}$ and $\lambda_{\max}^{\mathrm{d}}$, respectively.
Before proving the main results of this section, we state the following lemma,
which shows that the Gelfand formula for the spectral radius $\rho$ holds
uniformly over compact sets of matrices. This follows from the estimates
derived in Green \cite[Section 3.3]{Green1996Uniform}. For the reader's
convenience, we provide a proof.

\begin{lemma}
\label{LemmGelfandUnif} Let $\mathcal{A} \subset\mathcal{M}_{d}(\mathbb{R})$
be a compact set of matrices. Then the limit
\[
\lim_{n\rightarrow\infty}\left\Vert A^{n}\right\Vert ^{1/n}=\rho(A)
\]
is uniform over $\mathcal{A}$.
\end{lemma}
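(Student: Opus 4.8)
The plan is to prove uniform convergence in the Gelfand formula over a compact set $\mathcal{A}$ by combining a uniform upper bound and a uniform lower bound, both of which follow from standard but careful applications of spectral-radius estimates together with compactness.

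First I would recall the two elementary facts underlying Gelfand's formula: for any matrix $A$, $\rho(A) \le \norm{A^n}^{1/n}$ for every $n \in \mathbb{N}^{\ast}$ (since $\rho(A)^n = \rho(A^n) \le \norm{A^n}$), and for any $\varepsilon > 0$ one has $\norm{A^n} \le C_{A,\varepsilon}(\rho(A) + \varepsilon)^n$ for some constant. The lower bound $\rho(A) \le \norm{A^n}^{1/n}$ is already uniform, so the whole difficulty is in making the upper bound uniform over $\mathcal{A}$. The key quantitative tool, available from Green \cite[Section 3.3]{Green1996Uniform}, is an explicit estimate: for a matrix $A$ in a bounded set, say with $\norm{A} \le R$, and for any $\delta > 0$, one can bound $\norm{A^n}$ by a polynomial in $n$ (of degree at most $d-1$, coming from the size of Jordan blocks) times $(\rho(A) + \delta)^n$, where the polynomial coefficients depend only on $R$ and $\delta$, not on the individual matrix $A$. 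I would state this as the core estimate and then argue as follows.

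Next I would fix $\varepsilon > 0$ and show that $\norm{A^n}^{1/n} \le \rho(A) + \varepsilon$ for all $n$ large enough, uniformly in $A \in \mathcal{A}$. Since $\mathcal{A}$ is compact, it is bounded, so there is $R$ with $\norm{A} \le R$ for all $A \in \mathcal{A}$; also $\rho$ is continuous on $\mathcal{M}_d(\mathbb{R})$, hence bounded on $\mathcal{A}$, say $\rho(A) \le \rho_0$. Applying the uniform estimate with $\delta = \varepsilon/2$, we get $\norm{A^n} \le Q(n)(\rho(A) + \varepsilon/2)^n$ for all $A \in \mathcal{A}$, where $Q$ is a fixed polynomial depending only on $R$, $d$, and $\varepsilon$. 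Then
\[
\norm{A^n}^{1/n} \le Q(n)^{1/n}\left(\rho(A) + \tfrac{\varepsilon}{2}\right) \le Q(n)^{1/n}\left(\rho_0 + \tfrac{\varepsilon}{2}\right),
\]
and since $Q(n)^{1/n} \to 1$ as $n \to \infty$, there is $N$, independent of $A$, such that $Q(n)^{1/n}(\rho_0 + \varepsilon/2) \le \rho_0 + \varepsilon$ for $n \ge N$; combined with $Q(n)^{1/n} \to 1$ applied more carefully one gets $\norm{A^n}^{1/n} \le \rho(A) + \varepsilon$ for $n \ge N$, for all $A \in \mathcal{A}$ simultaneously. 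Together with $\rho(A) \le \norm{A^n}^{1/n}$ this gives $\abs{\norm{A^n}^{1/n} - \rho(A)} \le \varepsilon$ for all $n \ge N$ and all $A \in \mathcal{A}$, which is exactly uniform convergence.

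The main obstacle, and the step requiring the most care, is producing the polynomial bound $\norm{A^n} \le Q(n)(\rho(A)+\delta)^n$ with $Q$ depending only on $R$ and $\delta$ but \emph{not} on $A$. The subtlety is that the naive Jordan-form argument gives a constant that blows up as eigenvalues collide or as the conditioning of the diagonalizing transformation degrades, so one cannot simply invoke continuity of a per-matrix constant. The resolution — which is the content of Green's estimates — is to avoid Jordan form altogether and instead use a Cauchy-integral / resolvent representation: writing $A^n = \frac{1}{2\pi i}\oint_\Gamma z^n (zI - A)^{-1}\,dz$ over a contour $\Gamma$ enclosing the spectrum at distance $\delta$ from $\rho(A)$, one bounds $\norm{(zI-A)^{-1}}$ on $\Gamma$ uniformly in terms of $R$ and $\delta$ (the resolvent norm is controlled by $\mathrm{dist}(z,\mathrm{spec}(A))^{-1}$ times a factor depending on $\norm{A}$ via the characteristic polynomial), giving $\norm{A^n} \le C(R,\delta)\,(\rho(A)+\delta)^{n+1}\,\ell(\Gamma)$, which is even cleaner than a polynomial factor. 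I would present this resolvent estimate as the lemma's engine, cite Green for the details, and then assemble the $\varepsilon$-argument above.
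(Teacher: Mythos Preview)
Your proposal is correct but follows a different route from the paper. The paper does not invoke Green's resolvent estimates directly; it gives a self-contained local argument: fix $\varepsilon>0$, normalize to $F(A)=A/(\rho(A)+\varepsilon)$ so that $\rho(F(A))<1$, and for each fixed $A$ choose (via Horn--Johnson) an operator norm $\|\cdot\|_A$ with $\|F(A)\|_A<1$; by continuity this same norm gives $\|F(B)\|_A<1$ for all $B$ in a neighborhood of $A$, hence $\|B^n\|^{1/n}<\rho(B)+\varepsilon$ for $n$ large uniformly on that neighborhood, and compactness of $\mathcal A$ furnishes a finite subcover. Your approach instead extracts a single global estimate $\|A^n\|\le C(R,\delta)(\rho(A)+\delta)^{n+1}$ valid on all of $\mathcal A$ at once, via the Cauchy integral for $A^n$ and an adjugate bound on the resolvent. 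This is closer in spirit to Green's original argument and yields an explicit uniform $N(\varepsilon)$ without any covering step; the paper's version is more elementary, needing only norm equivalence and continuity rather than complex analysis. Two small points to tighten in your write-up: on the circle $|z|=\rho(A)+\delta$ the resolvent is controlled through $|\det(zI-A)|\ge\delta^d$, so the dependence is $\delta^{-d}$ rather than $\mathrm{dist}(z,\mathrm{spec}(A))^{-1}$; and the passage to $Q(n)^{1/n}(\rho(A)+\varepsilon/2)\le\rho(A)+\varepsilon$ should be argued via $Q(n)^{1/n}\le 1+\tfrac{\varepsilon/2}{\rho_0+\varepsilon/2}$ using the uniform bound $\rho(A)\le\rho_0$, as you acknowledge when you write ``applied more carefully''.
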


\begin{proof}
Let $\varepsilon>0$ and define a continuous function $F:\mathcal{A}\rightarrow\mathcal{M}_{d}(\mathbb{R})$ by
\[
F(A)=\frac{1}{\rho(A)+\varepsilon}A.
\]
Then $F(\mathcal{A})$ is compact and for every $F(A)\in F(\mathcal{A})$ its
spectral radius is $\rho(F(A))=\frac{\rho(A)}{\rho(A)+\varepsilon}<1$. Fix
$A\in\mathcal{A}$. Then (see, e.g., Horn and Johnson \cite[Lemma
5.6.10]{Horn2013Matrix}) there is a norm $\left\Vert \cdot\right\Vert _{A}$ in
$\mathbb{R}^{d}$ with $\left\Vert F(A)\right\Vert _{A}<\frac{1+\rho(F(A))}{2}$. Then for all $B$ in a neighborhood $U$ of $A$
\[
\left\Vert F(B)\right\Vert _{A}<\frac{1+\rho(F(A))}{2}.
\]
Since all norms on $\mathcal{M}_{d}(\mathbb{R})$ are equivalent, there is
$\beta_{A}>0$ such that for all $B\in U$
\[
\left\Vert F(B)^{n}\right\Vert \leq\beta_{A}\left\Vert F(B)^{n}\right\Vert
_{A}\leq\beta_{A}\left\Vert F(B)\right\Vert _{A}^{n}\leq\beta_{A}\left(
\frac{1+\rho(F(A))}{2}\right)  ^{n}.
\]
Then there is $N\in\mathbb{N}^{\ast}$, depending only on $A$ and $\varepsilon
$, such that for all $n\geq N$ and all $B\in U$,
\[
\frac{1}{\rho(B)+\varepsilon}\left\Vert B^{n}\right\Vert ^{1/n}=\left\Vert
F(B)^{n}\right\Vert ^{1/n}<1,
\]
implying $\left\Vert B^{n}\right\Vert ^{1/n}<\rho(B)+\varepsilon$. Since this
holds for every $B$ in a neighborhood $U$ of $A$ and $\left\Vert
B^{n}\right\Vert ^{1/n}\geq\rho(B)$ for every $n\in\mathbb{N}^{\ast}$, one
obtains that the convergence in $U$ is uniform, and the assertion follows by
compactness of $\mathcal{A}$.
\end{proof}

We can now prove our first result regarding the characterization of
$\lambda_{\max}^{\mathrm{c}}$ and $\lambda_{\max}^{\mathrm{d}}$.

\begin{proposition}
\label{PropLambdaMax} For almost every $\omega\in\Omega$, we have
\begin{equation}
\label{LambdaMaxInvariant}\lambda_{\max}^{\mathrm{d}}=\lim_{n\rightarrow
\infty}\frac{1}{n}\log\left\Vert \mathit{\Phi}(n,\omega)\right\Vert .
\end{equation}
Moreover,
\begin{equation}
\label{LambdaMaxErgodic}\lambda_{\max}^{\mathrm{d}}\leq\inf_{n\in
\mathbb{N}^{\ast}}\frac{1}{n}\int_{\Omega}\log\left\Vert \mathit{\Phi
}(n,\omega)\right\Vert \diff\mathbb{P}(\omega)=\lim_{n\rightarrow\infty}\frac{1}{n}\int_{\Omega}\log\left\Vert \mathit{\Phi}(n,\omega)\right\Vert
\diff\mathbb{P}(\omega).
\end{equation}

\end{proposition}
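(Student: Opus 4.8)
The plan is to read the first identity off the Oseledets data in Proposition \ref{MET}, and to get the second one from subadditivity together with Fatou's lemma. Since all matrix norms on $\mathcal{M}_d(\mathbb{R})$ are equivalent, the quantity $\lim_n\frac{1}{n}\log\norm{\mathit{\Phi}(n,\omega)}$ does not depend on the chosen norm, so I would fix the operator norm induced by the Euclidean norm. For $\omega$ in the set $\widehat\Omega$ of Proposition \ref{MET}, write $S_n(\omega)=\mathit{\Phi}(n,\omega)^{\transp}\mathit{\Phi}(n,\omega)$; this is symmetric positive definite (each $e^{L_i t}$ is invertible), so $\norm{\mathit{\Phi}(n,\omega)}^2=\rho(S_n(\omega))$, and by Proposition \ref{MET}\ref{METPsiExists} the limit $\Psi(\omega)=\lim_n S_n(\omega)^{1/2n}$ exists and is positive definite, with $\rho(\Psi(\omega))=e^{\lambda_1^{\mathrm{d}}}=e^{\lambda_{\max}^{\mathrm{d}}}$ by Proposition \ref{MET}\ref{METSpecPsi}. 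Because $S_n(\omega)$ is positive definite one has $\rho(S_n(\omega))^{1/2n}=\rho(S_n(\omega)^{1/2n})$, and since the spectral radius depends continuously on the matrix, this tends to $\rho(\Psi(\omega))=e^{\lambda_{\max}^{\mathrm{d}}}$; taking logarithms yields \eqref{LambdaMaxInvariant}.

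For \eqref{LambdaMaxErgodic}, set $a_n=\int_\Omega\log\norm{\mathit{\Phi}(n,\omega)}\diff\mathbb{P}(\omega)$. I would first check that $a_n$ is a well-defined real number: with $C\geq1$, $\gamma>0$ as in the proof of Proposition \ref{MET}, submultiplicativity together with $\norm{e^{\pm L_i t}}\leq Ce^{\gamma t}$ ($t\geq0$) gives $C^{-n}e^{-\gamma s_n(\omega)}\leq\norm{\mathit{\Phi}(n,\omega)}\leq C^n e^{\gamma s_n(\omega)}$, whence $\abs{\log\norm{\mathit{\Phi}(n,\omega)}}\leq n\log C+\gamma s_n(\omega)$, which is $\mathbb{P}$-integrable since $\int_\Omega s_n\diff\mathbb{P}=nm$. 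The cocycle relation $\mathit{\Phi}(n+m,\omega)=\mathit{\Phi}(n,\theta^m\omega)\mathit{\Phi}(m,\omega)$, submultiplicativity, and the $\theta$-invariance of $\mathbb{P}$ give $a_{n+m}\leq a_n+a_m$, so Fekete's subadditive lemma yields $\inf_n\frac{1}{n}a_n=\lim_n\frac{1}{n}a_n$, which is the second equality in \eqref{LambdaMaxErgodic}.

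It remains to prove $\lambda_{\max}^{\mathrm{d}}\leq\inf_n\frac{1}{n}a_n$, and here I would invoke \eqref{LambdaMaxInvariant} and Fatou's lemma. Put $g_n(\omega)=\frac{1}{n}\log\norm{\mathit{\Phi}(n,\omega)}+\gamma\frac{s_n(\omega)}{n}+\log C$; by the lower bound above $g_n\geq0$. By Proposition \ref{PropRegularAlpha}, $\frac{s_n(\omega)}{n}\to m$ for a.e.\ $\omega$, so \eqref{LambdaMaxInvariant} gives $g_n(\omega)\to\lambda_{\max}^{\mathrm{d}}+\gamma m+\log C$ for a.e.\ $\omega$, while $\int_\Omega g_n\diff\mathbb{P}=\frac{1}{n}a_n+\gamma m+\log C$. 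Fatou's lemma then gives $\lambda_{\max}^{\mathrm{d}}+\gamma m+\log C\leq\liminf_n\frac{1}{n}a_n+\gamma m+\log C$, i.e.\ $\lambda_{\max}^{\mathrm{d}}\leq\liminf_n\frac{1}{n}a_n=\inf_n\frac{1}{n}a_n$, which finishes the proof.

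The substance of the argument is entirely in \eqref{LambdaMaxInvariant} and is inherited from Proposition \ref{MET} (existence of $\Psi$ and the identification of its spectral radius with $e^{\lambda_{\max}^{\mathrm{d}}}$); once that is in hand, \eqref{LambdaMaxErgodic} is soft. The one point that deserves attention is the two-sided bound $C^{-n}e^{-\gamma s_n(\omega)}\leq\norm{\mathit{\Phi}(n,\omega)}\leq C^n e^{\gamma s_n(\omega)}$: the lower half is exactly what makes $a_n$ finite and $g_n$ nonnegative, so that Fekete's and Fatou's lemmas can be applied without fuss.
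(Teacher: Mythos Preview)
Your proof is correct and, in fact, more economical than the paper's in both parts.

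For \eqref{LambdaMaxInvariant}, the paper proceeds via Gelfand's formula: it writes
\[
e^{\lambda_{\max}^{\mathrm{d}}}=\lim_{n\to\infty}\rho\bigl[(\mathit{\Phi}(n,\omega)^{\transp}\mathit{\Phi}(n,\omega))^{1/2n}\bigr]=\lim_{n\to\infty}\lim_{k\to\infty}\norm{(\mathit{\Phi}(n,\omega)^{\transp}\mathit{\Phi}(n,\omega))^{k/2n}}^{1/k},
\]
and then needs Lemma~\ref{LemmGelfandUnif} (uniformity of Gelfand's formula on compact sets) to justify taking the diagonal $k=2n$. Your observation that for positive definite $S_n$ one has $\rho(S_n)^{1/2n}=\rho(S_n^{1/2n})$, combined with continuity of the spectral radius, bypasses this detour entirely and renders Lemma~\ref{LemmGelfandUnif} unnecessary for the present purpose.

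For the inequality in \eqref{LambdaMaxErgodic}, the paper fixes $m\in\mathbb{N}^{\ast}$, bounds $\frac{1}{nm}\log\norm{\mathit{\Phi}(nm,\omega)}$ by a Birkhoff average for $\theta^{m}$, and invokes Birkhoff's ergodic theorem to identify the limit of that average with $\frac{1}{m}\int_{\Omega}\log\norm{\mathit{\Phi}(m,\omega)}\diff\mathbb{P}(\omega)$. Your route via Fatou's lemma, after the shift by $\gamma\frac{s_n}{n}+\log C$ to ensure nonnegativity, is equally valid and avoids any discussion of the ergodicity of $\theta^{m}$ (which is not asserted in the paper; strictly speaking, Birkhoff gives convergence to a conditional expectation, and one then integrates). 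Both arguments rest on the same two-sided bound $C^{-n}e^{-\gamma s_n(\omega)}\leq\norm{\mathit{\Phi}(n,\omega)}\leq C^{n}e^{\gamma s_n(\omega)}$, which you correctly single out as the point that makes everything well defined. The subadditivity argument for the equality in \eqref{LambdaMaxErgodic} is identical in both proofs.
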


\begin{proof}
Notice that \eqref{LambdaMaxInvariant} and \eqref{LambdaMaxErgodic} do not
depend on the norm in $\mathcal{M}_{d}(\mathbb{R})$. We fix in this proof the
norm induced by the Euclidean norm in $\mathbb{R}^{d}$, given by $\left\|
A\right\|  = \sqrt{\rho(A^{{\mathrm{T}}}A)}$. Notice that, in this case,
$\left\|  A^{{\mathrm{T}}}A\right\|  = \sqrt{\rho((A^{{\mathrm{T}}}A)^{2})} =
\rho(A^{{\mathrm{T}}}A) = \left\|  A\right\|  ^{2}$.

By Proposition \ref{MET}\ref{METSpecPsi}, $e^{\lambda_{\max}^{\mathrm{d}}}$ is
the spectral radius $\rho(\Psi(\omega))$ of $\Psi(\omega)$ for almost every
$\omega\in\Omega$. By continuity of the spectral radius and Proposition
\ref{MET}\ref{METPsiExists}, one then gets that
\begin{align}
e^{\lambda_{\max}^{\mathrm{d}}}  &  =\lim_{n\rightarrow\infty}\rho\left[
\left(  \mathit{\Phi}(n,\omega)^{{\mathrm{T}}}\mathit{\Phi}(n,\omega)\right)
^{1/2n}\right] \nonumber\\
&  = \lim_{n\rightarrow\infty}\lim_{k\rightarrow\infty}\left\Vert \left(
\mathit{\Phi}(n,\omega)^{{\mathrm{T}}}\mathit{\Phi}(n,\omega)\right)
^{k/2n}\right\Vert ^{1/k}, \label{LimNLimK}\end{align}
using also Gelfand's Formula for the spectral radius. The sequence of matrices
$\left(  \left(  \mathit{\Phi}(n,\omega)^{{\mathrm{T}}}\mathit{\Phi}(n,\omega)\right)  ^{1/2n}\right)  _{n=1}^{\infty}$ converges to $\Psi
(\omega)$, hence this sequence is bounded in $\mathcal{M}_{d}(\mathbb{R})$. By
Lemma \ref{LemmGelfandUnif}, the limit in Gelfand's Formula is uniform, which
shows that one can take the limit in \eqref{LimNLimK} along the line $k=2n$ to
obtain
\[
e^{\lambda_{\max}^{\mathrm{d}}}=\lim_{n\rightarrow\infty}\left\Vert
\mathit{\Phi}(n,\omega)^{{\mathrm{T}}}\mathit{\Phi}(n,\omega)\right\Vert
^{1/2n}=\lim_{n\rightarrow\infty}\left\Vert \mathit{\Phi}(n,\omega)\right\Vert
^{1/n}.
\]
Hence \eqref{LambdaMaxInvariant} follows by taking the logarithm.

In order to prove \eqref{LambdaMaxErgodic}, fix $m\in\mathbb{N}^{\ast}$. By
\eqref{LambdaMaxInvariant}, for almost every $\omega\in\Omega$,
\begin{equation}
\label{Lambda1Subsequence}\lambda_{\max}^{\mathrm{d}}=\lim_{n\rightarrow
\infty}\frac{1}{nm}\log\left\Vert \mathit{\Phi}(nm,\omega)\right\Vert .
\end{equation}
One has $\mathit{\Phi}(nm,\omega)=\mathit{\Phi}(m,\theta^{(n-1)m}\omega
)\dotsm\mathit{\Phi}(m,\theta^{m}\omega)\mathit{\Phi}(m,\omega)$, and thus
\begin{equation}
\label{SubadditivityM}\frac{1}{nm}\log\left\Vert \mathit{\Phi}(nm,\omega
)\right\Vert \leq\frac{1}{nm}\sum_{k=0}^{n-1}\log\left\Vert \mathit{\Phi
}(m,\theta^{mk}\omega)\right\Vert .
\end{equation}
Since $\theta^{m}$ preserves $\mathbb{P}$ and $\log\left\Vert \mathit{\Phi
}(m,\cdot)\right\Vert \in L^{1}(\Omega,\mathbb{R})$, Birkhoff's Ergodic
Theorem shows that
\begin{equation}
\label{LimitBirkhoff}\lim_{n\rightarrow\infty}\frac{1}{nm}\sum_{k=0}^{n-1}\log\left\Vert \mathit{\Phi}(m,\theta^{mk}\omega)\right\Vert =\frac{1}{m}\int_{\Omega}\log\left\Vert \mathit{\Phi}(m,\omega)\right\Vert
\diff \mathbb{P}(\omega).
\end{equation}
Combining \eqref{Lambda1Subsequence}, \eqref{SubadditivityM}, and
\eqref{LimitBirkhoff}, one obtains the inequality in \eqref{LambdaMaxErgodic}.
The sequence $\left(  \int_{\Omega}\log\left\Vert \mathit{\Phi}(n,\omega
)\right\Vert \diff \mathbb{P}(\omega)\right)  _{n} $ is subadditive, since
$\mathit{\Phi}(n+m,\omega)=\mathit{\Phi}(m,\theta^{n}\omega)\mathit{\Phi
}(n,\omega)$ for $n,m\in\mathbb{N}$ and $\theta$ preserves $\mathbb{P}$. This
subadditivity implies that the equality in \eqref{LambdaMaxErgodic} holds.
\end{proof}

Under some extra assumptions on the probability measures $\mu_{i}$, $i
\in\underline N$, one obtains that the inequality in \eqref{LambdaMaxErgodic}
is actually an equality.

\begin{proposition}
\label{PropMaxLyapInf} Suppose there exists $r > 1$ such that, for every $i
\in\underline N$, one has $\int_{(0, \infty)} t^{r} \diff \mu_{i}(t) < \infty
$. Then $\lambda_{\max}^{\mathrm{d}}$ is given by
\[
\lambda_{\max}^{\mathrm{d}} = \inf_{n \in\mathbb{N}^{\ast}} \frac{1}{n}
\int_{\Omega} \log\left\|  \mathit{\Phi}(n, \omega)\right\|  \diff \mathbb{P}(\omega) = \lim_{n \to\infty} \frac{1}{n} \int_{\Omega} \log\left\|
\mathit{\Phi}(n, \omega)\right\|  \diff \mathbb{P}(\omega).
\]

\end{proposition}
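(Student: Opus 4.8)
The plan is to upgrade the almost sure convergence $\frac{1}{n}\log\|\mathit{\Phi}(n,\omega)\|\to\lambda_{\max}^{\mathrm{d}}$ furnished by \eqref{LambdaMaxInvariant} in Proposition \ref{PropLambdaMax} to convergence in $L^{1}(\Omega,\mathbb{P})$; granting this, one may pass to the limit inside $\frac{1}{n}\int_{\Omega}\log\|\mathit{\Phi}(n,\omega)\|\diff\mathbb{P}(\omega)$ to obtain $\lambda_{\max}^{\mathrm{d}}=\lim_{n\to\infty}\frac{1}{n}\int_{\Omega}\log\|\mathit{\Phi}(n,\omega)\|\diff\mathbb{P}(\omega)$, and since this sequence is subadditive — as shown at the end of the proof of Proposition \ref{PropLambdaMax} — Fekete's lemma identifies the limit with the infimum over $n\in\mathbb{N}^{\ast}$, which is precisely the assertion. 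Note that the inequality $\lambda_{\max}^{\mathrm{d}}\le\inf_{n}\frac{1}{n}\int_{\Omega}\log\|\mathit{\Phi}(n,\omega)\|\diff\mathbb{P}(\omega)$ is already available from \eqref{LambdaMaxErgodic}, so only the $L^{1}$ convergence is genuinely at stake.

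To produce an integrable dominating function I would use submultiplicativity of the operator norm together with the bounds $\|e^{L_{i}t}\|\le Ce^{\gamma t}$ and $\|e^{L_{i}t}x\|\ge C^{-1}e^{-\gamma t}\|x\|$ (the latter established in the proof of Proposition \ref{MET}), which give, for a.e.\ $\omega=(i_{n},t_{n})_{n=1}^{\infty}$ and every $n\in\mathbb{N}^{\ast}$,
\[
C^{-n}e^{-\gamma s_{n}(\omega)}\le\|\mathit{\Phi}(n,\omega)\|\le C^{n}e^{\gamma s_{n}(\omega)},\qquad\text{hence}\qquad\left|\tfrac{1}{n}\log\|\mathit{\Phi}(n,\omega)\|\right|\le\log C+\gamma\,\tfrac{s_{n}(\omega)}{n}.
\]
It therefore suffices to check that $h(\omega):=\log C+\gamma\sup_{k\ge1}\frac{s_{k}(\omega)}{k}$ lies in $L^{1}(\Omega,\mathbb{P})$. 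Writing $f(\omega)=t_{1}$, one has $f\circ\theta^{k-1}(\omega)=t_{k}$, so $\frac{s_{k}(\omega)}{k}=\frac{1}{k}\sum_{j=0}^{k-1}f(\theta^{j}\omega)$ and $\sup_{k}\frac{s_{k}}{k}$ is exactly the ergodic maximal function of $f$ associated with the measure-preserving transformation $\theta$. The hypothesis yields $\int_{\Omega}f^{r}\diff\mathbb{P}=\sum_{i=1}^{N}p_{i}\int_{(0,\infty)}t^{r}\diff\mu_{i}(t)<\infty$, i.e.\ $f\in L^{r}(\Omega,\mathbb{P})$ with $r>1$, and the maximal ergodic inequality (Dunford--Schwartz, obtained by transference from the Hardy--Littlewood maximal inequality) then gives $\bigl\|\sup_{k}\tfrac{s_{k}}{k}\bigr\|_{L^{r}}\le\tfrac{r}{r-1}\|f\|_{L^{r}}<\infty$; hence $h\in L^{r}(\Omega,\mathbb{P})\subset L^{1}(\Omega,\mathbb{P})$ because $\mathbb{P}$ is a probability measure.

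With the domination $\left|\tfrac{1}{n}\log\|\mathit{\Phi}(n,\omega)\|\right|\le h(\omega)$ in hand, the dominated convergence theorem applied to \eqref{LambdaMaxInvariant} gives $\tfrac{1}{n}\int_{\Omega}\log\|\mathit{\Phi}(n,\omega)\|\diff\mathbb{P}(\omega)\to\lambda_{\max}^{\mathrm{d}}$, and subadditivity of $n\mapsto\int_{\Omega}\log\|\mathit{\Phi}(n,\omega)\|\diff\mathbb{P}(\omega)$ forces this limit to coincide with $\inf_{n}\tfrac{1}{n}\int_{\Omega}\log\|\mathit{\Phi}(n,\omega)\|\diff\mathbb{P}(\omega)$, which completes the argument. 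The one delicate point is the $L^{1}$ control of the maximal function $\sup_{k}s_{k}(\omega)/k$: a mere first-moment assumption ($r=1$) places this maximal function only in weak-$L^{1}$, which is not enough for domination, and this is exactly why the strengthened moment condition $r>1$ is imposed.
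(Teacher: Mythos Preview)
Your argument is correct and follows the same overall strategy as the paper --- reduce to controlling the ergodic averages $\tfrac{s_{n}(\omega)}{n}$ and then exploit the $r$-th moment hypothesis to interchange limit and integral --- but the technical execution differs. The paper invokes Vitali's convergence theorem and establishes uniform integrability of $\bigl(\tfrac{s_{n}}{n}\bigr)_{n\ge 1}$ directly via H\"older's inequality: for any measurable $E$,
\[
\int_{E}\frac{s_{n}(\omega)}{n}\diff\mathbb{P}(\omega)=\frac{1}{n}\sum_{i=1}^{n}\int_{E}t_{i}\diff\mathbb{P}(\omega)\le K^{1/r}\,\mathbb{P}(E)^{1/r'},
\]
with $K=\max_{i}\int t^{r}\diff\mu_{i}$, which immediately gives the required uniform smallness. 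You instead use dominated convergence with the explicit majorant $h=\log C+\gamma\sup_{k}\tfrac{s_{k}}{k}$ and appeal to the $L^{r}$-boundedness of the ergodic maximal operator (Dunford--Schwartz) to place $h$ in $L^{r}\subset L^{1}$. Both routes are valid; the paper's is more elementary in that it avoids any maximal-function machinery, while yours yields a bit more --- an honest $L^{1}$ (indeed $L^{r}$) dominating function rather than mere uniform integrability. Your closing remark about why $r>1$ is needed is on point and matches the role the hypothesis plays in either proof.
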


\begin{proof}
One clearly has, using \eqref{LambdaMaxInvariant}, that
\[
\lambda_{\max}^{\mathrm{d}}=\int_{\Omega}\lambda_{\max}^{\mathrm{d}}
\diff \mathbb{P}(\omega)=\int_{\Omega}\lim_{n\rightarrow\infty}\frac{1}{n}\log\left\Vert \mathit{\Phi}(n,\omega)\right\Vert \diff \mathbb{P}(\omega).
\]
The theorem is proved if we show one can exchange the limit and the integral
in the above expression, which we do by applying Vitali's convergence theorem
(see, e.g., Rudin \cite[Chapter 6]{Rudin1987Real}). We are thus left to show
that the sequence of functions $\left(  \frac{1}{n}\log\left\Vert
\mathit{\Phi}(n,\cdot)\right\Vert \right)  _{n = 1}^{\infty}$ is uniformly
integrable, i.e., for every $\varepsilon>0$, there exists $\delta>0$ such
that, for every $E\in\mathfrak{F}$ with $\mathbb{P}(E)<\delta$, one has
$\frac{1}{n}\left\vert \int_{E}\log\left\Vert \mathit{\Phi}(n,\omega
)\right\Vert \diff \mathbb{P}(\omega)\right\vert <\varepsilon$.

For $\omega= (i_{n}, t_{n})_{n=1}^{\infty} \in\Omega_{0}$ and $n \in
\mathbb{N}^{\ast}$, one has $\mathit{\Phi}(n, \omega) = e^{L_{i_{n}} t_{n}}
\dotsm e^{L_{i_{1}} t_{1}}$ and $\mathit{\Phi}(0, \omega) = \id_{d}$. Let $C,
\gamma> 0$ be such that $\left\|  e^{L_{i} t}\right\|  \leq C e^{\gamma\left|
t\right|  }$ for every $i \in\underline N$ and $t
\in\mathbb{R}$. For every $n \in\mathbb{N}$, since $\mathit{\Phi}(n+1, \omega)
= e^{L_{i_{n+1}} t_{n+1}} \mathit{\Phi}(n, \omega)$ and $\mathit{\Phi}(n,
\omega) = e^{- L_{i_{n+1}} t_{n+1}} \mathit{\Phi}(n+1, \omega)$, one obtains
that
\[
C^{-1} e^{-\gamma t_{n+1}} \left\|  \mathit{\Phi}(n, \omega)\right\|
\leq\left\|  \mathit{\Phi}(n+1, \omega)\right\|  \leq C e^{\gamma t_{n+1}}
\left\|  \mathit{\Phi}(n, \omega)\right\|  ,
\]
and thus an inductive argument yields, for $n \in\mathbb{N}^{\ast}$,
\[
C^{-n} e^{-\gamma s_{n}(\omega)} \leq\left\|  \mathit{\Phi}(n, \omega
)\right\|  \leq C^{n} e^{\gamma s_{n}(\omega)},
\]
where $s_{n}(\omega) = \sum_{i=1}^{n} t_{i}$. Then
\[
\big\lvert\log\left\|  \mathit{\Phi}(n, \omega)\right\|  \big\rvert  \leq n
\log C + \gamma s_{n}(\omega).
\]
Hence, it suffices to show that the sequence $\left(  \frac{s_{n}}{n}\right)
_{n=1}^{\infty}$ is uniformly integrable.

For $n \in\mathbb{N}^{\ast}$ and $E \in\mathfrak{F}$, we have, by H\"older's
inequality,
\begin{align}
\int_{E} \frac{s_{n}(\omega)}{n} \diff \mathbb{P}(\omega)  &  = \frac{1}{n}
\sum_{i=1}^{n} \int_{E} t_{i} \diff \mathbb{P}(\omega)\nonumber\\
&  \leq\frac{1}{n} \sum_{i=1}^{n} \left(  \int_{\Omega} t_{i}^{r}
\diff \mathbb{P}(\omega)\right)  ^{\frac{1}{r}} \mathbb{P}(E)^{\frac
{1}{r^{\prime}}} \leq K^{\frac{1}{r}} \mathbb{P}(E)^{\frac{1}{r^{\prime}}},
\label{UnifIntegrable}\end{align}
where $r^{\prime}\in(1, \infty)$ is such that $\frac{1}{r} + \frac
{1}{r^{\prime}} = 1$ and $K = \max_{i \in\underline N} \int_{(0, \infty)}
t^{r} \diff \mu_{i}(t) < \infty$. Equation \eqref{UnifIntegrable} establishes
the uniform integrability of $\left(  \frac{s_{n}}{n}\right)  _{n = 1}^{\infty}$, which yields the result.
\end{proof}

As an immediate consequence of Proposition \ref{PropRegularAlpha}, Proposition
\ref{PropLambdaCD}, Proposition \ref{PropLambdaMax}, and Proposition
\ref{PropMaxLyapInf}, we obtain the following result.

\begin{corollary}
\label{CoroLambdaMaxCD} The maximal Lyapunov exponents $\lambda_{\max
}^{\mathrm{c}}$ and $\lambda_{\max}^{\mathrm{d}}$ satisfy
\begin{equation}
\label{LambdaMaxDIneq}m \lambda_{\max}^{\mathrm{c}} = \lambda_{\max
}^{\mathrm{d}} \leq\inf_{n\in\mathbb{N}^{\ast}}\frac{1}{n}\int_{\Omega}\log\left\Vert \mathit{\Phi}(n,\omega)\right\Vert \diff \mathbb{P}(\omega).
\end{equation}
In particular, if
\begin{equation}
\label{CondNSExpoStab}\text{there exists } n \in\mathbb{N}^{\ast}\text{ such
that } \int_{\Omega}\log\left\Vert \mathit{\Phi}(n,\omega)\right\Vert
\diff \mathbb{P}(\omega) < 0,
\end{equation}
then systems \eqref{MainSystContRand} and \eqref{MainSystDiscrRand} are almost
surely exponentially stable.

If we have further that there exists $r > 1$ such that $\int_{\mathbb{R}_{+}}
t^{r} \diff \mu_{i}(t) < \infty$ for every $i \in\underline N$, then the
inequality in \eqref{LambdaMaxDIneq} is an equality and \eqref{CondNSExpoStab}
is equivalent to the almost sure exponential stability of
\eqref{MainSystContRand} and to the almost sure exponential stability of \eqref{MainSystDiscrRand}.
\end{corollary}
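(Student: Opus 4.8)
The plan is simply to concatenate the four cited propositions, so I expect the argument to be short and essentially bookkeeping. First I would dispatch \eqref{LambdaMaxDIneq}. The equality $m \lambda_{\max}^{\mathrm{c}} = \lambda_{\max}^{\mathrm{d}}$ is nothing but the relation $\lambda_1^{\mathrm{c}} = \frac{1}{m} \lambda_1^{\mathrm{d}}$ already obtained in the proof of Proposition \ref{MET} (it follows from Proposition \ref{PropLambdaCD} applied to any $x_0 \in V_1(\omega) \setminus V_2(\omega)$, on which $\lambda_{\mathrm{rc}} = \lambda_1^{\mathrm{c}}$ and $\lambda_{\mathrm{rd}} = \lambda_1^{\mathrm{d}}$ by Proposition \ref{MET}\ref{METLyapIff}), together with $\lambda_{\max}^{\mathrm{c}} = \lambda_1^{\mathrm{c}}$ and $\lambda_{\max}^{\mathrm{d}} = \lambda_1^{\mathrm{d}}$. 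The inequality $\lambda_{\max}^{\mathrm{d}} \leq \inf_{n \in \mathbb{N}^{\ast}} \frac{1}{n} \int_\Omega \log \lVert \mathit{\Phi}(n,\omega) \rVert \diff \mathbb{P}(\omega)$ is verbatim \eqref{LambdaMaxErgodic} of Proposition \ref{PropLambdaMax}.

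Next I would observe that, for either of the two systems, almost sure exponential stability is equivalent to negativity of the corresponding maximal Lyapunov exponent, and that, since $m = \sum_{i} p_i \tau_i > 0$ by Proposition \ref{PropRegularAlpha}, one has $\lambda_{\max}^{\mathrm{c}} < 0 \iff \lambda_{\max}^{\mathrm{d}} < 0$. Indeed, by Proposition \ref{MET}\ref{METLyapIff} the exponents $\lambda_{\mathrm{rc}}(x_0,\omega)$ and $\lambda_{\mathrm{rd}}(x_0,\omega)$ exist as genuine limits on the invariant full-measure set $\widehat\Omega$ and are bounded above by $\lambda_{\max}^{\mathrm{c}}$, respectively $\lambda_{\max}^{\mathrm{d}}$, for every $x_0 \neq 0$; so if these maxima are negative, every trajectory of \eqref{MainSystContRand} and \eqref{MainSystDiscrRand} decays exponentially for $\omega \in \widehat\Omega$. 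Conversely, almost sure exponential stability forces, upon choosing $x_0$ outside the proper subspace $V_2(\omega)$, that $\lambda_{\mathrm{rc}}(x_0,\omega) = \lambda_{\max}^{\mathrm{c}} < 0$. In particular, if \eqref{CondNSExpoStab} holds with some $n_0$, then \eqref{LambdaMaxDIneq} gives $\lambda_{\max}^{\mathrm{d}} \leq \frac{1}{n_0} \int_\Omega \log \lVert \mathit{\Phi}(n_0,\omega) \rVert \diff \mathbb{P}(\omega) < 0$, hence both systems are almost surely exponentially stable.

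Finally, under the added moment hypothesis $\int_{\mathbb{R}_+} t^r \diff \mu_i(t) < \infty$ for some $r > 1$ and all $i \in \underline N$, Proposition \ref{PropMaxLyapInf} promotes the inequality in \eqref{LambdaMaxDIneq} to the equality $\lambda_{\max}^{\mathrm{d}} = \inf_{n \in \mathbb{N}^{\ast}} \frac{1}{n} \int_\Omega \log \lVert \mathit{\Phi}(n,\omega) \rVert \diff \mathbb{P}(\omega)$. Combined with the previous paragraph this yields the chain of equivalences: almost sure exponential stability of \eqref{MainSystContRand} (or of \eqref{MainSystDiscrRand}) $\iff$ $\lambda_{\max}^{\mathrm{d}} < 0$ $\iff$ the above infimum is negative $\iff$ some term of the sequence $\bigl( \int_\Omega \log \lVert \mathit{\Phi}(n,\omega) \rVert \diff \mathbb{P}(\omega) \bigr)_n$ is negative, which is precisely \eqref{CondNSExpoStab}. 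I do not foresee any genuine obstacle; the only point deserving a word is to pin down ``almost surely exponentially stable'' as ``$\lambda_{\mathrm{rc}}(x_0, \cdot) < 0$ almost surely for every $x_0 \neq 0$'', equivalently $\lambda_{\max}^{\mathrm{c}} < 0$, after which everything reduces to the propositions already at hand.
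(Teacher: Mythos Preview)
Your proposal is correct and follows exactly the route the paper takes: the corollary is stated there as an immediate consequence of Propositions \ref{PropRegularAlpha}, \ref{PropLambdaCD}, \ref{PropLambdaMax}, and \ref{PropMaxLyapInf}, and you have simply spelled out the bookkeeping that links them. The only thing you add beyond the paper is the explicit identification of almost sure exponential stability with $\lambda_{\max}^{\mathrm{c}}<0$ (equivalently $\lambda_{\max}^{\mathrm{d}}<0$), which is appropriate since the paper leaves that implicit.
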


We conclude this section with the following characterization of a weighted sum
of the Lyapunov exponents $\lambda_{i}^{\mathrm{d}}$, $i\in\underline{N}$.

\begin{proposition}
Suppose there exists $r > 1$ such that, for every $i \in\underline N$, one has
$\int_{(0, +\infty)} t^{r} \diff \mu_{i}(t) < \infty$. Then
\begin{equation}
\label{SumLyapExp}\sum_{i=1}^{q} m_{i} \lambda_{i}^{\mathrm{d}} = \sum
_{i=1}^{N} p_{i} \tau_{i} \Tr(L_{i}),
\end{equation}
where $m_{i}$ is as in Proposition \ref{MET}\ref{METSpecPsi}.
\end{proposition}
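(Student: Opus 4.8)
The plan is to reduce the identity to a determinant computation combined with Birkhoff's Ergodic Theorem. Since $d_{1}=d$ and $d_{q+1}=0$, one has $\sum_{i=1}^{q}m_{i}=d$, so the left-hand side of \eqref{SumLyapExp} is exactly the sum of all $d$ Lyapunov exponents of the discrete-time random dynamical system $(\theta,\varphi_{\mathrm{rd}})$, counted with multiplicity. The first step is to show that, for almost every $\omega\in\Omega$, this sum equals the exponential growth rate $\lim_{n\to\infty}\frac{1}{n}\log\abs{\det\mathit{\Phi}(n,\omega)}$ of the determinant of the cocycle. This follows directly from Proposition \ref{MET}: by Proposition \ref{MET}\ref{METPsiExists} the matrix $\Psi(\omega)=\lim_{n\to\infty}(\mathit{\Phi}(n,\omega)^{\transp}\mathit{\Phi}(n,\omega))^{1/2n}$ exists and is positive definite, and by Proposition \ref{MET}\ref{METSpecPsi} its eigenvalues are $e^{\lambda_{1}^{\mathrm{d}}},\dotsc,e^{\lambda_{q}^{\mathrm{d}}}$ with respective algebraic multiplicities $m_{i}$, so that $\det\Psi(\omega)=\exp\bigl(\sum_{i=1}^{q}m_{i}\lambda_{i}^{\mathrm{d}}\bigr)$; on the other hand, by continuity of the determinant together with the identity $\det\bigl((\mathit{\Phi}(n,\omega)^{\transp}\mathit{\Phi}(n,\omega))^{1/2n}\bigr)=\abs{\det\mathit{\Phi}(n,\omega)}^{1/n}$, this same quantity equals $\lim_{n\to\infty}\abs{\det\mathit{\Phi}(n,\omega)}^{1/n}$. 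Taking logarithms gives the claim.

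Next I would compute $\det\mathit{\Phi}(n,\omega)$ explicitly. For $\omega=(i_{n},t_{n})_{n=1}^{\infty}$ one has $\mathit{\Phi}(n,\omega)=e^{L_{i_{n}}t_{n}}\dotsm e^{L_{i_{1}}t_{1}}$, hence $\det\mathit{\Phi}(n,\omega)=\prod_{k=1}^{n}e^{t_{k}\Tr(L_{i_{k}})}=\exp\bigl(\sum_{k=1}^{n}t_{k}\Tr(L_{i_{k}})\bigr)>0$, so that $\log\abs{\det\mathit{\Phi}(n,\omega)}=\sum_{k=1}^{n}t_{k}\Tr(L_{i_{k}})$. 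This is a Birkhoff sum for the shift: setting $g(\omega)=t_{1}\Tr(L_{i_{1}})$ one has $g\circ\theta^{k}(\omega)=t_{k+1}\Tr(L_{i_{k+1}})$, and therefore $\log\abs{\det\mathit{\Phi}(n,\omega)}=\sum_{k=0}^{n-1}g(\theta^{k}\omega)$.

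The final step is to apply Birkhoff's Ergodic Theorem to $g$, using that $\theta$ is ergodic on $(\Omega,\mathfrak{F},\mathbb{P})$. One needs $g\in L^{1}(\Omega,\mathbb{P})$, which holds because $\abs{g}\leq\bigl(\max_{i\in\underline{N}}\abs{\Tr(L_{i})}\bigr)t_{1}$ and $\int_{\Omega}t_{1}\diff\mathbb{P}(\omega)=\sum_{i=1}^{N}p_{i}\tau_{i}=m<\infty$ (here the finite-expectation assumption on $\mu_{1},\dotsc,\mu_{N}$ already suffices, and the moment hypothesis $\int_{(0,+\infty)}t^{r}\diff\mu_{i}(t)<\infty$ with $r>1$ is more than enough). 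Birkhoff's theorem then yields, for almost every $\omega\in\Omega$, $\lim_{n\to\infty}\frac{1}{n}\log\abs{\det\mathit{\Phi}(n,\omega)}=\int_{\Omega}g\diff\mathbb{P}(\omega)=\sum_{i=1}^{N}p_{i}\Tr(L_{i})\int_{\mathbb{R}_{+}}t\diff\mu_{i}(t)=\sum_{i=1}^{N}p_{i}\tau_{i}\Tr(L_{i})$, and combining this with the first step — whose left-hand side is deterministic — gives \eqref{SumLyapExp}.

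I do not anticipate a genuine obstacle. The two points that require a little care are the identification $\det\Psi(\omega)=\exp\bigl(\sum_{i}m_{i}\lambda_{i}^{\mathrm{d}}\bigr)$, which is precisely why the algebraic multiplicities were recorded in Proposition \ref{MET}\ref{METSpecPsi}, and the $L^{1}$-integrability of $g$ needed to invoke Birkhoff, which is immediate from the integrability assumptions on the $\mu_{i}$.
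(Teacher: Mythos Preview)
Your argument is correct, and the first two steps---identifying $\sum_{i}m_{i}\lambda_{i}^{\mathrm{d}}$ with $\log\det\Psi(\omega)$ and then with $\lim_{n\to\infty}\frac{1}{n}\sum_{k=1}^{n}t_{k}\Tr(L_{i_{k}})$---match the paper exactly. The difference is in the final step. The paper integrates the almost-sure identity over $\Omega$ and then exchanges limit and integral via Vitali's convergence theorem, invoking the uniform integrability of $\left(\frac{s_{n}}{n}\right)_{n\geq1}$ established in the proof of Proposition~\ref{PropMaxLyapInf}; this is precisely where the moment hypothesis $\int t^{r}\diff\mu_{i}(t)<\infty$ with $r>1$ enters. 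You instead recognize $\frac{1}{n}\sum_{k=1}^{n}t_{k}\Tr(L_{i_{k}})$ as a Birkhoff average of $g(\omega)=t_{1}\Tr(L_{i_{1}})$ and apply Birkhoff's Ergodic Theorem directly, which only requires $g\in L^{1}(\Omega,\mathbb{P})$ and hence only the finite-expectation assumption already standing on the $\mu_{i}$. Your route is thus slightly more elementary and, as you observe, shows that the extra moment condition in the statement is not actually needed for \eqref{SumLyapExp}.
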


\begin{proof}
Thanks to Proposition \ref{MET}\ref{METSpecPsi}, one obtains that, for almost
every $\omega= (i_{n},\allowbreak t_{n})_{n=1}^{\infty}\in\Omega$,
\[
\det\Psi(\omega) = \prod_{i=1}^{q} e^{m_{i} \lambda_{i}^{\mathrm{d}}},
\]
which yields
\begin{align*}
\sum_{i=1}^{q} m_{i} \lambda_{i}^{\mathrm{d}}  &  = \log\det\Psi(\omega) =
\lim_{n \to\infty} \log\det\left(  \mathit{\Phi}(n, \omega)^{{\mathrm{T}}
}\mathit{\Phi}(n, \omega)\right)  ^{1/2n} \displaybreak[0]\\
&  = \lim_{n \to\infty} \log\left(  \prod_{k=1}^{n} \det e^{L_{i_{k}} t_{k}}\right)  ^{1/n} = \lim_{n \to\infty} \frac{1}{n} \sum_{k=1}^{n} t_{k}
\Tr(L_{i_{k}}).
\end{align*}
Then
\begin{align*}
\sum_{i=1}^{q} m_{i} \lambda_{i}^{\mathrm{d}}  &  = \int_{\Omega} \sum
_{i=1}^{q} m_{i} \lambda_{i}^{\mathrm{d}} \diff \mathbb{P}(\omega) =
\int_{\Omega} \lim_{n \to\infty} \frac{1}{n} \sum_{k=1}^{n} t_{k}
\Tr(L_{i_{k}}) \diff \mathbb{P}(\omega) \displaybreak[0]\\
&  = \lim_{n \to\infty} \frac{1}{n} \sum_{k=1}^{n} \int_{\Omega} t_{k}
\Tr(L_{i_{k}}) \diff \mathbb{P}(\omega) = \sum_{i=1}^{N} p_{i} \tau_{i}
\Tr(L_{i}),
\end{align*}
where we exchange limit and integral thanks to Vitali's convergence theorem
and to the fact that $\left(  \frac{s_{n}(\omega)}{n}\right)  _{n=1}^{\infty}=
\left(  \frac{1}{n} \sum_{k=1}^{n} t_{k}\right)  _{n=1}^{\infty}$ is uniformly
integrable, as shown in the proof of Proposition \ref{PropMaxLyapInf}.
\end{proof}

\section{Main result}

\label{SecApplications}

In this section, we use the stability criterion from Corollary
\ref{CoroLambdaMaxCD} to study the stabilization by linear feedback laws of
\eqref{IntroMainSyst}. As stated in the Introduction, we write
\eqref{IntroMainSyst} under the form \eqref{MainSystAugm}, which is a switched
control system with dynamics given by the $N$ equations $\dot x = \widehat A x
+ \widehat B_{i} u_{i}$, $i \in\underline N$.

We consider system \eqref{MainSystAugm} in a probabilistic setting by taking
random signals $\pmb\alpha(\omega)$ as in Definition \ref{DefPmbAlpha}, i.e.,
the random control system $\dot x(t) = \widehat A x(t) + \widehat
B_{\pmb\alpha(\omega)(t)} \allowbreak u_{\pmb\alpha(\omega)(t)}(t)$. The
problem treated in this section is the arbitrary rate stabilizability of this
system by linear feedback laws $u_{i} = K_{i} P_{i} x$, $i \in\underline N$,
where we recall that $P_{i} \in\mathcal{M}_{d_{i}, d}(\mathbb{R})$ is the
projection onto the $i$-th factor of $\mathbb{R}^{d} = \mathbb{R}^{d_{1}}
\times\dotsb\times\mathbb{R}^{d_{N}}$. More precisely, we consider the
closed-loop random switched system
\begin{equation}
\label{ClosedLoop}\dot x(t) = \left(  \widehat A + \widehat B_{\pmb\alpha
(\omega)(t)} K_{\pmb\alpha(\omega)(t)} P_{\pmb\alpha(\omega)(t)}\right)  x(t).
\end{equation}
We wish to know if, given $\lambda\in\mathbb{R}$, there exist matrices $K_{i}
\in\mathcal{M}_{m_{i}, d_{i}}(\mathbb{R})$, $i \in\underline N$, such that the
maximal Lyapunov exponent $\lambda_{\max}^{\mathrm{c}}$ of the continuous-time
system \eqref{ClosedLoop}, defined as in Section \ref{SecMaxLyap}, satisfies
$\lambda_{\max}^{\mathrm{c}} \leq\lambda$. Our main result is the following,
which states that this is true under the controllability of $(A_{i}, B_{i})$
for every $i \in\underline N$, thus implying that arbitrary decay rates are achievable.

\begin{theorem}
\label{MainTheoApp} Let $N \in\mathbb{N}^{\ast}$, $d_{1}, \dotsc, d_{N},
m_{1}, \dotsc, m_{N} \in\mathbb{N}$, $A_{i} \in\mathcal{M}_{d_{i}}(\mathbb{R})$, $B_{i} \in\mathcal{M}_{d_{i}, m_{i}}(\mathbb{R})$ for $i
\in\underline N$, and assume that $(A_{i}, B_{i})$ is controllable for every
$i \in\underline N$. Define $\widehat A$ and $\widehat B_{i}$ as in
\eqref{AugmState}. Then, for every $\lambda\in\mathbb{R}$, there exist
matrices $K_{i} \in\mathcal{M}_{m_{i}, d_{i}}(\mathbb{R})$, $i \in\underline
N$, such that the maximal Lyapunov exponent $\lambda_{\max}^{\mathrm{c}}$ of
the closed-loop random switched system \eqref{ClosedLoop} satisfies
$\lambda_{\max}^{\mathrm{c}} \leq\lambda$.
\end{theorem}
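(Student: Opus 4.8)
The plan is to exploit the block structure of the closed-loop matrices to reduce the problem to a pole-placement estimate for each subsystem, and then to control the maximal Lyapunov exponent through Birkhoff's ergodic theorem via formula \eqref{LambdaMaxInvariant} of Proposition \ref{PropLambdaMax}.

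First I would observe that, since $\widehat A = \operatorname{diag}(A_1, \dotsc, A_N)$ and $\widehat B_i K_i P_i$ has $B_i K_i$ in its $(i,i)$-block and zeros elsewhere, the matrix $L_i := \widehat A + \widehat B_i K_i P_i$ is block diagonal, with $j$-th diagonal block equal to $A_j$ for $j \neq i$ and to $A_i + B_i K_i$ for $j = i$. Hence $\mathit{\Phi}(n,\omega) = e^{L_{i_n}t_n}\dotsm e^{L_{i_1}t_1}$ is block diagonal, and, since \eqref{LambdaMaxInvariant} does not depend on the norm, I may endow $\mathbb{R}^d = \mathbb{R}^{d_1}\times\dotsb\times\mathbb{R}^{d_N}$ with the norm $\norm{x} = \max_{j\in\underline N}\norm{x_j}$, for which the operator norm of a block-diagonal matrix is the maximum of the operator norms of its diagonal blocks. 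Then $\log\norm{\mathit{\Phi}(n,\omega)} = \max_{j\in\underline N}\log\norm{\mathit{\Phi}_j(n,\omega)}$, where $\mathit{\Phi}_j(n,\omega)$ is the $j$-th diagonal block of $\mathit{\Phi}(n,\omega)$, so that $\mathit{\Phi}_j(n,\omega) = M_n^{(j)}\dotsm M_1^{(j)}$ with $M_k^{(j)} = e^{A_j t_k}$ if $i_k \neq j$ and $M_k^{(j)} = e^{(A_j+B_jK_j)t_k}$ if $i_k = j$. Fix also $\gamma > 0$ with $\norm{e^{A_j t}}\leq e^{\gamma t}$ for all $j\in\underline N$ and $t\geq 0$ (e.g.\ $\gamma = \max_j\norm{A_j}$).

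Now fix $\lambda\in\mathbb{R}$ and a parameter $\beta > 0$ to be chosen later. By the controllability of each $(A_j,B_j)$, classical pole placement provides feedbacks $K_j$ (depending on $\beta$) such that all eigenvalues of $A_j + B_j K_j$ have real part less than $-\beta$, and the classical estimate of the transient overshoot yields a constant $C_\beta\geq 1$, growing only polynomially in $\beta$ --- so that $\log C_\beta = o(\beta)$ as $\beta\to\infty$ --- with $\norm{e^{(A_j+B_jK_j)t}}\leq C_\beta e^{-\beta t}$ for all $t\geq 0$ and $j\in\underline N$. Then $\log\norm{M_k^{(j)}} = g_j(\theta^{k-1}\omega)$, where $g_j:\Omega\to\mathbb{R}$ is defined by $g_j((i_n,t_n)_n) = \log\norm{M_1^{(j)}}$; since $\abs{g_j}$ is bounded by an affine function of $t_1$ and $\int_\Omega t_1\diff\mathbb{P} = m < \infty$, we have $g_j\in L^1(\Omega,\mathbb{R})$, and, since the law of $(i_1,t_1)$ under $\mathbb{P}$ assigns to $\{i\}\times U$ the value $p_i\mu_i(U)$ and $\sum_{i\neq j}p_i\tau_i = m - p_j\tau_j$,
\[
\int_\Omega g_j\diff\mathbb{P} \leq \gamma(m - p_j\tau_j) + p_j(\log C_\beta - \beta\tau_j) = \gamma m - p_j\big[(\gamma+\beta)\tau_j - \log C_\beta\big].
\]
By submultiplicativity, $\tfrac{1}{n}\log\norm{\mathit{\Phi}_j(n,\omega)}\leq\tfrac{1}{n}\sum_{k=1}^n g_j(\theta^{k-1}\omega)$, and Birkhoff's ergodic theorem (applicable since $\theta$ is ergodic) gives $\tfrac{1}{n}\sum_{k=1}^n g_j(\theta^{k-1}\omega)\to\int_\Omega g_j\diff\mathbb{P}$ for almost every $\omega$. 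Taking the maximum over the finitely many $j\in\underline N$ and comparing with \eqref{LambdaMaxInvariant}, I would conclude that
\[
\lambda_{\max}^{\mathrm{d}} \leq \max_{j\in\underline N}\int_\Omega g_j\diff\mathbb{P} \leq \gamma m - \min_{j\in\underline N}p_j\big[(\gamma+\beta)\tau_j - \log C_\beta\big].
\]

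To conclude, recall that $p_j > 0$ for every $j$ (since $M$ is an irreducible stochastic matrix) and $\tau_j > 0$ (since $\mu_j$ is a probability measure on $(0,\infty)$), and set $\delta = \min_j p_j > 0$, $\tau_0 = \min_j\tau_j > 0$. For $\beta$ large enough that $a_\beta := (\gamma+\beta)\tau_0 - \log C_\beta > 0$, every factor $(\gamma+\beta)\tau_j - \log C_\beta$ is at least $a_\beta$, hence $\lambda_{\max}^{\mathrm{d}}\leq\gamma m - \delta a_\beta$; since $\log C_\beta = o(\beta)$, $a_\beta\to+\infty$ as $\beta\to\infty$, so one may fix $\beta$ with $\gamma m - \delta a_\beta\leq m\lambda$, whence $\lambda_{\max}^{\mathrm{d}}\leq m\lambda$, and finally $\lambda_{\max}^{\mathrm{c}}\leq\lambda$ by the identity $m\lambda_{\max}^{\mathrm{c}} = \lambda_{\max}^{\mathrm{d}}$ from Corollary \ref{CoroLambdaMaxCD}. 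The main obstacle is the pole-placement overshoot estimate: one genuinely needs that an arbitrarily large decay rate $\beta$ can be imposed on each $A_j + B_j K_j$ at the price of only a subexponentially (indeed polynomially) growing transient constant $C_\beta$. This peaking-type estimate, classical in linear control theory, is exactly where the controllability of the pairs $(A_i,B_i)$ is used.
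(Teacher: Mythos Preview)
Your proof is correct and shares with the paper the two essential ingredients: the block-diagonal structure of the closed-loop generators and the polynomial-overshoot pole-placement estimate (the paper cites Cheng, Guo, Lin, and Wang for precisely the bound $\norm{e^{(A_i+B_iK_i)t}}\leq C\gamma^{D}e^{-\gamma t}$, which is your $\log C_\beta = O(\log\beta)$). Where you diverge is in how the ergodic bound is obtained.

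The paper uses Corollary~\ref{CoroLambdaMaxCD} with a fixed $n=r$: by irreducibility of $M$ it picks a finite word $(i_1^\ast,\dotsc,i_r^\ast)$ of positive probability visiting every state, expands $\int_\Omega\log\norm{\mathit{\Phi}(r,\omega)}\diff\mathbb{P}$ as a sum over $\underline N^{r}$, and shows that the single term corresponding to $(i_1^\ast,\dotsc,i_r^\ast)$ contributes a summand going to $-\infty$ linearly in the decay parameter while all others grow only logarithmically. You instead exploit the block structure through the max norm so that $\log\norm{\mathit{\Phi}(n,\omega)}=\max_j\log\norm{\mathit{\Phi}_j(n,\omega)}$, bound each block by its one-step Birkhoff average, and pass to the limit via \eqref{LambdaMaxInvariant}. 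This sidesteps the path-covering argument entirely: irreducibility enters only through $p_j>0$, and you never need to fix a horizon $r$ or integrate the full cocycle norm. Your route is more elementary and gives a cleaner final estimate; the paper's route stays closer to the general framework of Corollary~\ref{CoroLambdaMaxCD} (bounding $\lambda_{\max}^{\mathrm d}$ by a finite-$n$ expectation) and would generalize more readily to situations without an invariant block decomposition.
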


\begin{proof}
Let $C \geq1,\; \beta> 0$ be such that, for every $i \in\underline N$ and
every $t \geq0$, $\left\|  e^{A_{i} t}\right\|  \leq C e^{\beta t}$. Thanks to
Cheng, Guo, Lin, and Wang \cite[Proposition 2.1]{Cheng2004Note} (see also
\cite[Proposition 1]{Cheng2005Erratum}), we may assume that $C$ is chosen
large enough such that the following property holds: there exists $D
\in\mathbb{N}^{\ast}$ such that, for every $\gamma\geq1$ and $i \in\underline
N$, there exists a matrix $K_{i} \in\mathcal{M}_{m_{i}, d_{i}}(\mathbb{R})$
with
\begin{equation}
\label{EstimCGLW}\left\|  e^{(A_{i} + B_{i} K_{i}) t}\right\|  \leq C
\gamma^{D} e^{-\gamma t}, \qquad\forall t \in\mathbb{R}_{+}.
\end{equation}
Let $\widehat K_{i} = K_{i} P_{i} \in\mathcal{M}_{m_{i}, d}(\mathbb{R})$.
Then
\[
\widehat A + \widehat B_{i} \widehat K_{i} =
\begin{pmatrix}
A_{1} & 0 & \cdots & 0 & \cdots & 0\\
0 & A_{2} & \cdots & 0 & \cdots & 0\\
\vdots & \vdots & \ddots & \vdots & \ddots & \vdots\\
0 & 0 & \cdots & A_{i} + B_{i} K_{i} & \cdots & 0\\
\vdots & \vdots & \ddots & \vdots & \ddots & \vdots\\
0 & 0 & \cdots & 0 & \cdots & A_{N}\end{pmatrix}
,
\]
and thus, for every $t \in\mathbb{R}$,
\[
e^{(\widehat A + \widehat B_{i} \widehat K_{i}) t} =
\begin{pmatrix}
e^{A_{1} t} & 0 & \cdots & 0 & \cdots & 0\\
0 & e^{A_{2} t} & \cdots & 0 & \cdots & 0\\
\vdots & \vdots & \ddots & \vdots & \ddots & \vdots\\
0 & 0 & \cdots & e^{(A_{i} + B_{i} K_{i}) t} & \cdots & 0\\
\vdots & \vdots & \ddots & \vdots & \ddots & \vdots\\
0 & 0 & \cdots & 0 & \cdots & e^{A_{N} t}\end{pmatrix}
.
\]

Since $M$ is irreducible and $p$ is invariant under $M$, we have $p_{i} > 0$
for every $i \in\underline N$. The irreducibility of $M$ also provides the
existence of $r \geq N$ and $(i_{1}^{\ast}, \dotsc, i_{r}^{\ast})
\in\underline N^{r}$ such that $\{i_{1}^{\ast}, \dotsc, i_{r}^{\ast}\} =
\underline N$ and $M_{i_{1}^{\ast}i_{2}^{\ast}} \dotsm M_{i_{r-1}^{\ast}
i_{r}^{\ast}} > 0$. In order to apply Corollary \ref{CoroLambdaMaxCD},
consider
\begin{align}
&  \int_{\Omega}\log\left\|  \mathit{\Phi}(r, \omega)\right\|
\diff \mathbb{P}(\omega) = \sum_{(i_{1}, \dotsc, i_{r}) \in{\underline N}^{r}}
p_{i_{1}} M_{i_{1} i_{2}} \dotsm M_{i_{r-1} i_{r}}\nonumber\\
&  \hspace{1.5cm} \cdot\int_{(0, \infty)^{r}} \log\left\|  e^{(\widehat A +
\widehat B_{i_{r}} \widehat K_{i_{r}}) t_{r}} \dotsm e^{(\widehat A + \widehat
B_{i_{1}} \widehat K_{i_{1}}) t_{1}}\right\|  \diff \mu_{i_{1}}(t_{1})
\dotsm\diff \mu_{i_{r}}(t_{r}). \label{EstimMaxLyap}\end{align}
Since $\sum_{i=1}^{N} P_{i}^{{\mathrm{T}}}P_{i} = \id_{d}$ and $P_{i}
e^{(\widehat A + \widehat B_{i} \widehat K_{i}) t} P_{j}^{{\mathrm{T}}}= 0$ if
$i \not = j$, we have, for every $(i_{1}, \dotsc, i_{r}) \in\underline N^{r}$
and $(t_{1}, \dotsc, t_{r}) \in\mathbb{R}_{+}^{r}$,
\begin{align}
&  e^{(\widehat A + \widehat B_{i_{r}} \widehat K_{i_{r}}) t_{r}} \dotsm
e^{(\widehat A + \widehat B_{i_{1}} \widehat K_{i_{1}}) t_{1}}
\displaybreak[0]\nonumber\\
{} = {}  &  \left(  \sum_{j_{r} = 1}^{N} P_{j_{r}}^{{\mathrm{T}}}P_{j_{r}}\right)  e^{(\widehat A + \widehat B_{i_{r}} \widehat K_{i_{r}}) t_{r}}
\dotsm\left(  \sum_{j_{1} = 1}^{N} P_{j_{1}}^{{\mathrm{T}}}P_{j_{1}}\right)
e^{(\widehat A + \widehat B_{i_{1}} \widehat K_{i_{1}}) t_{1}} \left(
\sum_{j_{0} = 1}^{N} P_{j_{0}}^{{\mathrm{T}}}P_{j_{0}}\right)
\displaybreak[0]\nonumber\\
{} = {}  &  \sum_{i = 1}^{N} P_{i}^{{\mathrm{T}}}P_{i} e^{(\widehat A +
\widehat B_{i_{r}} \widehat K_{i_{r}}) t_{r}} \dotsm P_{i}^{{\mathrm{T}}}P_{i}
e^{(\widehat A + \widehat B_{i_{1}} \widehat K_{i_{1}}) t_{1}} P_{i}^{{\mathrm{T}}}P_{i} \displaybreak[0]\nonumber\\
{} = {}  &  \sum_{i = 1}^{N} P_{i}^{{\mathrm{T}}}e^{(A_{i} + \delta_{i i_{r}}
B_{i} K_{i}) t_{r}} \dotsm e^{(A_{i} + \delta_{i i_{1}} B_{i} K_{i}) t_{1}}
P_{i}. \label{EqProdExponentials}\end{align}
Since, for every $i \in\underline N$ and $t \geq0$, we have $\left\|  e^{A_{i}
t}\right\|  \leq C e^{\beta t}$ and $\left\|  e^{(A_{i} + B_{i} K_{i})
t}\right\|  \leq C \gamma^{D} e^{-\gamma t}$, we get, for every $(i_{1},
\dotsc, i_{r}) \in\underline N^{r}$ and $(t_{1}, \dotsc, t_{r}) \in
\mathbb{R}_{+}^{r}$,
\begin{equation}
\label{LargeBound}\left\|  e^{(\widehat A + \widehat B_{i_{r}} \widehat
K_{i_{r}}) t_{r}} \dotsm e^{(\widehat A + \widehat B_{i_{1}} \widehat
K_{i_{1}}) t_{1}}\right\|  \leq N C^{r} \gamma^{r D} e^{\beta\sum_{i=1}^{r}
t_{i}}.
\end{equation}

When $(i_{1}, \dotsc, i_{r}) = (i_{1}^{\ast}, \dotsc, i_{r}^{\ast})$, we can
obtain a sharper bound than \eqref{LargeBound}. For $i \in\underline N$,
denote by $N(i)$ the nonempty set of all indices $k \in\underline{r}$ such
that $i_{k}^{\ast}= i$, and denote by $n(i) \in\mathbb{N}^{\ast}$ the number
of elements in $N(i)$. Then
\[
\left\|  P_{i}^{{\mathrm{T}}}e^{(A_{i} + \delta_{i i_{r}^{\ast}} B_{i} K_{i})
t_{r}} \dotsm e^{(A_{i} + \delta_{i i_{1}^{\ast}} B_{i} K_{i}) t_{1}}
P_{i}\right\|  \leq C^{r} \gamma^{n(i) D} e^{-\gamma\sum_{k \in N(i)} t_{k}}
e^{\beta\sum_{k \in\underline{r} \setminus N(i)} t_{k}},
\]
which shows, using \eqref{EqProdExponentials}, that
\begin{align}
\left\|  e^{(\widehat A + \widehat B_{i_{r}^{\ast}} \widehat K_{i_{r}^{\ast}})
t_{r}} \dotsm e^{(\widehat A + \widehat B_{i_{1}^{\ast}} \widehat
K_{i_{1}^{\ast}}) t_{1}}\right\|   &  \leq\sum_{i=1}^{N} C^{r} \gamma^{n(i) D}
e^{-\gamma\sum_{k \in N(i)} t_{k}} e^{\beta\sum_{k \in\underline{r} \setminus
N(i)} t_{k}}\nonumber\\
&  \leq N C^{r} \gamma^{r D} e^{-\gamma\min_{k \in\underline{r}} t_{k}} e^{r
\beta\max_{k \in\underline{r}} t_{k}}. \label{SharpBound}\end{align}

Let
\begin{align*}
E_{0}  &  = \max_{i \in\underline N} \tau_{i},\\
E_{\min}  &  = \int_{(0, \infty)^{r}} \min_{k \in\underline{r}} t_{k}
\diff \mu_{i_{1}^{\ast}}(t_{1}) \dotsm\diff \mu_{i_{r}^{\ast}}(t_{r}) > 0,\\
E_{\max}  &  = \int_{(0, \infty)^{r}} \max_{k \in\underline{r}} t_{k}
\diff \mu_{i_{1}^{\ast}}(t_{1}) \dotsm\diff \mu_{i_{r}^{\ast}}(t_{r}) <
\infty.
\end{align*}
Then, combining \eqref{LargeBound} and \eqref{SharpBound}, we obtain from
\eqref{EstimMaxLyap} that
\begin{align}
&  \int_{\Omega}\log\left\|  \mathit{\Phi}(r, \omega)\right\|
\diff \mathbb{P}(\omega) \leq N^{r} \left(  \log(N C^{r}) + r D \log\gamma+ r
\beta E_{0}\right) \nonumber\\
&  \hspace{1.5cm} {} + p_{i_{1}^{\ast}} M_{i_{1}^{\ast}i_{2}^{\ast}} \dotsm
M_{i_{r - 1}^{\ast}i_{r}^{\ast}} \left(  \log(N C^{r}) + r D \log\gamma-
\gamma E_{\min} + r \beta E_{\max}\right)  . \label{FinalEstimMaxLyap}\end{align}
The right-hand side of \eqref{FinalEstimMaxLyap} tends to $-\infty$ as
$\gamma\to\infty$, which can be achieved by \eqref{EstimCGLW}. Hence it
follows from Corollary \ref{CoroLambdaMaxCD} that the maximal Lyapunov
exponent of \eqref{ClosedLoop} can be made arbitrarily small.
\end{proof}

Recall that the main motivation for Theorem \ref{MainTheoApp}
comes from the stabilizability of persistently excited systems
\eqref{IntroPESyst} under linear feedback laws. Let us now provide an
application of Theorem \ref{MainTheoApp} to \eqref{IntroPESyst}. To do so, let
$\mu_{1}, \mu_{2} \in\mathrm{Pr}(\mathbb{R}_{+}^{\ast})$ have finite
expectation and $M \in\mathcal{M}_{2}(\mathbb{R})$ be right-stochastic and
irreducible with unique invariant probability vector $p \in\mathbb{R}^{2}$. We
also slightly modify Definition \ref{DefPmbAlpha} for the remainder of this
section by saying that, for $\omega= (i_{n}, t_{n})_{n=1}^{\infty}$, one has
$\pmb\alpha(\omega)(t) = 2 - i_{n}$ for $t \in[s_{n-1}, s_{n})$ and $n
\in\mathbb{N}^{\ast}$, which amounts to saying that $\pmb\alpha(\omega)$ takes
the value $0$ in the state $i = 2$ and the value $1$ in the state $i = 1$. As
a consequence of Theorem \ref{MainTheoApp}, we obtain the following result for \eqref{IntroPESyst}.

\begin{corollary}
\label{CoroToMainTheo} Let $d, m \in\mathbb{N}$, $A \in\mathcal{M}_{d}(\mathbb{R})$, $B \in\mathcal{M}_{d, m}(\mathbb{R})$, and consider system
\eqref{IntroPESyst}. If $(A, B)$ is controllable, then, for every $\lambda
\in\mathbb{R}$, there exists $K \in\mathcal{M}_{m, d}(\mathbb{R})$ such that
the maximal Lyapunov exponent $\lambda_{\max}^{\mathrm{c}}$ of the closed-loop
random switched system $\dot x(t) = (A + \pmb\alpha(\omega)(t) B K) x(t)$
satisfies $\lambda_{\max}^{\mathrm{c}} \leq\lambda$.
\end{corollary}

\begin{proof}
The corollary follows immediately from Theorem \ref{MainTheoApp} by letting $N
= 2$, $A_{1} = A$, $B_{1} = B$, and adding a trivial second subsystem with
$d_{2} = m_{2} = 0$.
\end{proof}

It was proved in \cite[Proposition 4.5]{Chitour2010Stabilization} that there
are (two dimensional) controllable systems for which the achievable decay
rates under persistently exciting signals through linear feedback laws are
bounded below, even when we consider only persistently exciting signals
$\alpha$ taking values in $\{0, 1\}$ instead of $[0, 1]$.
Corollary \ref{CoroToMainTheo} shows that, in the
probabilistic setting defined above, one can get arbitrarily large (almost
sure) decay rates for \eqref{IntroPESyst}, which is in
contrast to the situation for persistently excited systems. An explanation for
this fact is that the probability of having a signal $\alpha$ with very fast
switching for an infinitely long time, such as the signals used in the proof
of \cite[Proposition 4.5]{Chitour2010Stabilization}, is zero, and hence such
signals do not interfere with the behavior of the (random) maximal Lyapunov exponent.

Notice that, in general, $\pmb\alpha(\omega)$ is not $(T, \mu)$-persistently
exciting, but it can be shown to satisfy a condition similar to \eqref{CondPE}
in an asymptotic sense.

\begin{definition}
Let $\rho> 0$ and $\alpha: \mathbb{R}_{+} \to[0, 1]$ be measurable. We say
that $\alpha$ is \emph{$\rho$-asymptotically persistently exciting} if
\[
\liminf_{t \to\infty} \frac{1}{t} \int_{0}^{t} \alpha(s) \diff s \geq\rho.
\]

\end{definition}

It follows easily from \eqref{CondPE} that every $(T, \mu)$-persistently
exciting signal is $\frac{\mu}{T}$-asymptotically persistently exciting. In
order to prove that the above signals $\pmb\alpha(\omega)$ are almost surely
asymptotically persistently exciting for a suitable constant $\rho> 0$, we
assume, in order to simplify the proof, that, in the probabilistic model of
$\pmb\alpha$, trivial switches do not occur, which amounts to choosing
\begin{equation}
\label{DefiMPE}M =
\begin{pmatrix}
0 & 1\\
1 & 0
\end{pmatrix}
\end{equation}
with its unique invariant probability vector $p=\left(  \frac{1}{2}, \frac
{1}{2}\right)  $.

\begin{proposition}
Let $M$ be given by \eqref{DefiMPE}, $p$ be its unique invariant probability
vector, and $\mu_{1}, \mu_{2} \in\mathrm{Pr}(\mathbb{R}_{+}^{\ast})$ have
finite expectations $\tau_{1}, \tau_{2} \in\mathbb{R}_{+}^{\ast}$, respectively.

\begin{enumerate}
\item \label{PmbAlphaNotPE} If $\mu_{2}((0, t]) < 1$ for every $t > 0$, then,
for almost every $\omega\in\Omega$, the signal $\pmb\alpha(\omega)$ is not
$(T, \mu)$-persistently exciting for any $T, \mu\in\mathbb{R}_{+}^{\ast}$ with
$T \geq\mu$.

\item \label{PmbAlphaAsymptPE} For almost every $\omega\in\Omega$, the signal
$\pmb\alpha(\omega)$ is $\frac{\tau_{1}}{\tau_{1} + \tau_{2}}$-asymptotically
persistently exciting.
\end{enumerate}
\end{proposition}

\begin{proof}
To prove \ref{PmbAlphaNotPE}, we show that
\begin{equation}
\mathbb{P}\{\omega\in\Omega\;|\;\exists T\geq\mu>0\text{ such that }\pmb\alpha(\omega)\text{ is a PE }(T,\mu)\text{-signal}\}=0.\label{PEImpossible}\end{equation}
Since a $(T,\mu)$-signal is also a $(T^{\prime},\mu^{\prime})$-signal for
every $T^{\prime}\geq T$ and $0<\mu^{\prime}\leq\mu$, we have
\begin{align*}
&  \{\omega\in\Omega\;|\;\exists T\geq\mu>0\text{ such that }\pmb\alpha
(\omega)\text{ is a PE }(T,\mu)\text{-signal}\}\\
{}={} &  \bigcup_{T>0}\bigcup_{\mu\in(0,T]}\{\omega\in\Omega\;|\;\pmb\alpha
(\omega)\text{ is a PE }(T,\mu)\text{-signal}\}\\
{}={} &  \bigcup_{T\in\mathbb{N}^{\ast}}\bigcup_{\frac{1}{\mu}\in
\mathbb{N}^{\ast}}\{\omega\in\Omega\;|\;\pmb\alpha(\omega)\text{ is a PE
}(T,\mu)\text{-signal}\}.
\end{align*}
If $\alpha$ is a PE $(T,\mu)$-signal, the PE condition implies that $\alpha$
cannot remain zero during time intervals longer than $T-\mu$, and thus
\begin{align}
&  \{\omega\in\Omega\;|\;\pmb\alpha(\omega)\text{ is a PE }(T,\mu
)\text{-signal}\}\nonumber\\
{}\subset{} &  \{\omega=(i_{n},t_{n})_{n=1}^{\infty}\in\Omega\;|\;\forall
n\in\mathbb{N}^{\ast}:\;i_{n}=2\implies t_{n}\leq T-\mu
\}.\label{PEImpliesDoesNotRemainInZero}\end{align}
Since $i_{n}$ takes the value $2$ infinitely many times for almost every
$\omega\in\Omega$ and $\mu_{2}((0,T-\mu])<1$, the right-hand side of
\eqref{PEImpliesDoesNotRemainInZero} has measure zero, and thus
\eqref{PEImpossible} holds.

Proposition \ref{PropProportionTime} implies that, for almost every $\omega
\in\Omega$,
\[
\lim_{t \to\infty} \frac{1}{t} \int_{0}^{t} \pmb\alpha(\omega)(s) \diff s =
\frac{\tau_{1}}{\tau_{1} + \tau_{2}},
\]
and thus \ref{PmbAlphaAsymptPE} holds.
\end{proof}

\bibliographystyle{abbrv}
\bibliography{Bib}

\begin{thebibliography}{10}

\bibitem{Arnold1998Random}
L.~Arnold.
\newblock {\em Random Dynamical Systems}.
\newblock Springer Monographs in Mathematics. Sprin\-ger-Verlag, Berlin, 1998.

\bibitem{BH12}
Y.~Bakhtin and T.~Hurth.
\newblock Invariant densities for dynamical systems with random switching.
\newblock {\em Nonlinearity}, 25(10):2937--2952, 2012.

\bibitem{BBMZ15}
M.~Bena{\"{\i}}m, S.~Le~Borgne, F.~Malrieu, and P.-A. Zitt.
\newblock Qualitative properties of certain piecewise deterministic {M}arkov
  processes.
\newblock {\em Ann. Inst. Henri Poincar\'e Probab. Stat.}, 51(3):1040--1075,
  2015.

\bibitem{Bolzern2006Almost}
P.~Bolzern, P.~Colaneri, and G.~De~Nicolao.
\newblock On almost sure stability of continuous-time {M}arkov jump linear
  systems.
\newblock {\em Automatica J. IFAC}, 42(6):983--988, 2006.

\bibitem{Chaillet2008Uniform}
A.~Chaillet, Y.~Chitour, A.~Lor{\'{\i}}a, and M.~Sigalotti.
\newblock Uniform stabilization for linear systems with persistency of
  excitation: the neutrally stable and the double integrator cases.
\newblock {\em Math. Control Signals Systems}, 20(2):135--156, 2008.

\bibitem{Chatterjee2007Stability}
D.~Chatterjee and D.~Liberzon.
\newblock On stability of randomly switched nonlinear systems.
\newblock {\em IEEE Trans. Automat. Control}, 52(12):2390--2394, 2007.

\bibitem{Cheng2004Note}
D.~Cheng, L.~Guo, Y.~Lin, and Y.~Wang.
\newblock A note on overshoot estimation in pole placements.
\newblock {\em J. Control Theory Appl.}, 2(2):161--164, 2004.

\bibitem{Cheng2005Erratum}
D.~Cheng, L.~Guo, Y.~Lin, and Y.~Wang.
\newblock Erratum to: ``{A} note on overshoot estimation in pole placements''
  [{J}. {C}ontrol {T}heory {A}pp. {\bf 2} (2004), 161--164].
\newblock {\em J. Control Theory Appl.}, 3(3):258, 2005.

\bibitem{Chitour2014Growth}
Y.~Chitour, F.~Colonius, and M.~Sigalotti.
\newblock Growth rates for persistently excited linear systems.
\newblock {\em Math. Control Signals Systems}, 26(4):589--616, 2014.

\bibitem{Chitour2013Stabilization}
Y.~Chitour, G.~Mazanti, and M.~Sigalotti.
\newblock Stabilization of persistently excited linear systems.
\newblock In J.~Daafouz, S.~Tarbouriech, and M.~Sigalotti, editors, {\em Hybrid
  Systems with Constraints}, chapter~4. Wiley-ISTE, London, UK, 2013.

\bibitem{Chitour2010Stabilization}
Y.~Chitour and M.~Sigalotti.
\newblock On the stabilization of persistently excited linear systems.
\newblock {\em SIAM J. Control Optim.}, 48(6):4032--4055, 2010.

\bibitem{Cloez2015Exponential}
B.~Cloez and M.~Hairer.
\newblock Exponential ergodicity for {M}arkov processes with random switching.
\newblock {\em Bernoulli}, 21(1):505--536, 2015.

\bibitem{Costa2013Continuous}
O.~L.~V. Costa, M.~D. Fragoso, and M.~G. Todorov.
\newblock {\em Continuous-Time {M}arkov Jump Linear Systems}.
\newblock Probability and its Applications (New York). Springer, Heidelberg,
  2013.

\bibitem{Davis1984Piecewise}
M.~H.~A. Davis.
\newblock Piecewise-deterministic {M}arkov processes: a general class of
  nondiffusion stochastic models.
\newblock {\em J. Roy. Statist. Soc. Ser. B}, 46(3):353--388, 1984.

\bibitem{Davis1993Markov}
M.~H.~A. Davis.
\newblock {\em Markov models and optimization}, volume~49 of {\em Monographs on
  Statistics and Applied Probability}.
\newblock Chapman \& Hall, London, 1993.

\bibitem{Diwadkar2015Control}
A.~Diwadkar, S.~Dasgupta, and U.~Vaidya.
\newblock Control of systems in {L}ure form over erasure channels.
\newblock {\em Internat. J. Robust Nonlinear Control}, 25(15):2787--2802, 2015.

\bibitem{Diwadkar2014Stabilization}
A.~Diwadkar and U.~Vaidya.
\newblock Stabilization of linear time varying systems over uncertain channels.
\newblock {\em Internat. J. Robust Nonlinear Control}, 24(7):1205--1220, 2014.

\bibitem{Fang2002Stabilization}
Y.~Fang and K.~A. Loparo.
\newblock Stabilization of continuous-time jump linear systems.
\newblock {\em IEEE Trans. Automat. Control}, 47(10):1590--1603, 2002.

\bibitem{Feng1992Stochastic}
X.~Feng, K.~A. Loparo, Y.~Ji, and H.~J. Chizeck.
\newblock Stochastic stability properties of jump linear systems.
\newblock {\em IEEE Trans. Automat. Control}, 37(1):38--53, 1992.

\bibitem{Green1996Uniform}
J.~J. Green.
\newblock {\em Uniform Convergence to the Spectral Radius and Some Related
  Properties in Banach Algebras}.
\newblock PhD thesis, University of Sheffield, 1996.

\bibitem{Guyon2004Linear}
X.~Guyon, S.~Iovleff, and J.-F. Yao.
\newblock Linear diffusion with stationary switching regime.
\newblock {\em ESAIM Probab. Stat.}, 8:25--35, 2004.

\bibitem{Hairer2006Ergodic}
M.~Hairer.
\newblock Ergodic properties of {M}arkov processes.
\newblock Lecture notes from the University of Warwick, Spring 2006.

\bibitem{Halmos1974Measure}
P.~R. Halmos.
\newblock {\em Measure Theory}, volume~18 of {\em Graduate Texts in
  Mathematics}.
\newblock Springer-Verlag, 1974.

\bibitem{Horn2013Matrix}
R.~A. Horn and C.~R. Johnson.
\newblock {\em Matrix Analysis}.
\newblock Cambridge University Press, Cambridge, second edition, 2013.

\bibitem{LCLM12}
C.~Li, M.~Z.~Q. Chen, J.~Lam, and X.~Mao.
\newblock On exponential almost sure stability of random jump systems.
\newblock {\em IEEE Trans. Automat. Control}, 57(12):3064--3077, 2012.

\bibitem{Liberzon2003Switching}
D.~Liberzon.
\newblock {\em Switching in Systems and Control}.
\newblock Birkh{\"a}user Boston, 1 edition, 2003.

\bibitem{Lin2009Stability}
H.~Lin and P.~J. Antsaklis.
\newblock Stability and stabilizability of switched linear systems: a survey of
  recent results.
\newblock {\em IEEE Trans. Automat. Control}, 54(2):308--322, 2009.

\bibitem{Margaliot2006Stability}
M.~Margaliot.
\newblock Stability analysis of switched systems using variational principles:
  an introduction.
\newblock {\em Automatica}, 42(12):2059--2077, 2006.

\bibitem{Meyn2009Markov}
S.~Meyn and R.~L. Tweedie.
\newblock {\em Markov chains and stochastic stability}.
\newblock Cambridge University Press, Cambridge, second edition, 2009.
\newblock With a prologue by Peter W. Glynn.

\bibitem{Norris1998Markov}
J.~R. Norris.
\newblock {\em Markov chains}, volume~2 of {\em Cambridge Series in Statistical
  and Probabilistic Mathematics}.
\newblock Cambridge University Press, Cambridge, 1998.
\newblock Reprint of 1997 original.

\bibitem{Rudin1987Real}
W.~Rudin.
\newblock {\em Real and Complex Analysis}.
\newblock McGraw-Hill Book Co., New York, New York, third edition, 1987.

\bibitem{Shorten2007Stability}
R.~Shorten, F.~Wirth, O.~Mason, K.~Wulff, and C.~King.
\newblock Stability criteria for switched and hybrid systems.
\newblock {\em SIAM Rev.}, 49(4):545--592, 2007.

\bibitem{Srikant2015Arbitrarily}
S.~Srikant and M.~R. Akella.
\newblock Arbitrarily fast exponentially stabilizing controller for
  multi-input, persistently exciting singular control gain systems.
\newblock {\em Automatica J. IFAC}, 54:279--283, 2015.

\bibitem{Sun2005Switched}
Z.~Sun and S.~S. Ge.
\newblock {\em Switched Linear Systems: Control and Design}.
\newblock Communications and Control Engineering. Springer-Verlag, London,
  2005.

\end{thebibliography}

\end{document}